\newcommand{\CC}{\mathbb{C}}
\newcommand{\C}{\mathbb{C}}
\newcommand{\RR}{\mathbb{R}}
\newcommand{\R}{\mathbb{R}}
\newcommand{\VV}{\mathbb{V}}
\newcommand{\QQ}{\mathbb{Q}}
\newcommand{\calF}{\mathcal{F}}
\newcommand{\calH}{\mathcal{H}}
\newcommand{\calL}{\mathcal{L}}
\newcommand{\trace}{\mathrm{trace}}
\newcommand{\Rank}{\mathrm{rank}}
\newcommand{\Sign}{\mathrm{sign}}
\newtheorem{theorem}{Theorem}[section]
\newtheorem{proposition}[theorem]{Proposition}
\theoremstyle{definition}
\newtheorem{definition}[theorem]{Definition}
\newtheorem{example}[theorem]{Example}
\theoremstyle{remark}
\newtheorem{remark}[theorem]{Remark}
\numberwithin{equation}{section}
\begin{document}
\title[Time-optimal neural feedback control as binary classification]{Time-optimal neural feedback control of nilpotent systems as a binary classification problem}
\author[Bicego]{Sara Bicego}
\email[Bicego]{sara.bicego@outlook.com}
\author[Gue]{Samuel Gue}
\email[Gue]{s.c.m.gue.2235336@swansea.ac.uk}
\author[Kalise]{Dante Kalise}
\email[Kalise]{dkaliseb@ic.ac.uk}
\author[Villamizar]{Nelly Villamizar}
\email[Villamizar]{n.y.villamizar@swansea.ac.uk }
\address[Bicego, Kalise]{Department of Mathematics, Imperial College London, United Kingdom}
\address[Gue, Villamizar]{Department of Mathematics, Swansea University, United Kingdom}
\date{\today}

\begin{abstract}
A computational method for the synthesis of time-optimal feedback control laws for linear nilpotent systems is proposed. The method is based on the use of the bang-bang theorem, which leads to a characterization of the time-optimal trajectory as a parameter-dependent polynomial system for the control switching sequence. A deflated Newton’s method is then applied to exhaust all the real roots of the polynomial system. The root-finding procedure is informed by the Hermite quadratic form, which provides a sharp estimate on the number of real roots to be found. In the second part of the paper, the polynomial systems are sampled and solved to generate a synthetic dataset for the construction of a time-optimal deep neural network -interpreted as a binary classifier- via supervised learning. Numerical tests in integrators of increasing dimension assess the accuracy, robustness, and real-time-control capabilities of the approximate control law.
\end{abstract}
\maketitle

\section{Introduction}
Time-optimal control problems have a prominent place in control theory due to their many applications in robotics, aerospace, and trajectory planning \cite{TOCP,TOCR,TOCC}. A classical result dating back to the early days of optimal control, known as the \textsl{bang-bang} theorem states that, for finite-dimensional, linear time-optimal processes where the control variable is constrained to a convex, closed, and bounded polyhedron, the optimal control signal is piecewise constant, taking all its values at the vertices of the polyhedron (see the different formulations  in e.g. \cite{BellmanBB,LaSalle,PMP}). Moreover, under additional properties of the state-to-state map, an upper bound on the number of switches of the piecewise-constant control can be determined. Both statements will be made more precise in the next section. From a state space perspective, such a result transforms the time-optimal control synthesis into a geometric problem, that is, the identification of a switching surface splitting the state space into regions where different vertices of the control polyhedron are selected as the instantaneous optimal control action. Hence, while the original bang-bang theorem follows from first-order optimality conditions for a given initial state, it also provides a characterization of the optimal feedback control law as a piecewise constant map with a finite number of output values. With no loss of generality, in this paper we restrict our presentation to scalar control signals taking values in $[-1,1]$, for which the optimal feedback law corresponds to a binary classifier to $\{-1,1\}$. We study the construction of such a binary classifier via supervised learning with synthetic data obtained from sampling optimal trajectories, which are parametrized in terms of the switching structure of the optimal control signal.

In the first part of this paper, we follow a similar approach as in \cite{nilpotentFeedback}. Here, the authors cast a time-optimal control problem, for which a parametrization of the switching control sequence and time integration of the dynamics leads to a polynomial system of equations for the switching times. Since there is a unique solution to the time-optimal control problem, we only need to determine the existence of a non-negative solution of the polynomial system given a choice of the initial term of the switching control sequence, -1 or 1. This solvability problem is studied using Gr\"obner bases, Sturm sequences, and \texttt{Macaulay2} \cite{M2} as an effective computer algebra system for related calculations. Extensions of this work to a more general class of linear dynamical systems have been studied in \cite{patil,PATIL20151,RauscherSawodnyChainIntegrators}. A comprehensive review of the use of Gr\"obner bases in control theory is given in \cite{georgbook}. However, to the best of our knowledge, the computational complexity associated with the use of Gr\"obner bases for solving polynomial systems such as those arising in time-optimal control hinders their applicability to higher-dimensional systems than those reported in the aforementioned references and makes them unsuitable for real-time feedback control. Our contribution is to propose a methodology that circumvents both issues:
\begin{itemize}[itemsep=0.1cm,topsep=0.1cm]
 \renewcommand{\labelitemi}{\tiny{$\blacksquare$}}
\item Instead of analyzing the existence of non-negative solutions to the polynomial system, we apply a variant of Newton's method which can effectively exhaust all possible roots. We follow a deflation technique as originally proposed in \cite{brown70} for polynomial systems and later extended to a wider class of problems and applications \cite{bicego2024,deflation_2,deflation,deflation_3}. 
\item The stopping of the deflation method is determined by the number of roots it is expected to find. We apply Hermite's quadratic form as a mechanism to establish the number of roots of the polynomial systems under study. The applications of quadratic forms in root counting has been explored from as early as 1853 by Sylvester in \cite{SylvesterQuadraticForms}. The idea of using Hermite quadratic forms to solve polynomial systems with real parameters is discussed in \cite{PhuocLeSafeyElDinHermiteMatrices} and its use for sign determination in \cite{AlgorithmsBasuPollackRoy,BKR-SignDetermination,CohenSignDetermination}. 
A recent application of the above to semi-algebraic systems with real parameters can be found in \cite{GSeDRootCounting}.

\item A real-time feedback control is built using supervised learning. The block associated to the solution of the polynomial system receives as an input an initial condition, which enters as a parameter in the polynomial system, and outputs the first control value of the optimal switching sequence, which leads to the construction of a binary classifier mapping the $n$-th dimensional state space to $\{-1,1\}$. Inspired by recent work on supervised learning for optimal feedback control \cite{MPC_bookchapter,AKK,Cipriani,DKS,Lucia,NN_MPC,Oster2024} we train a deep neural network to approximate the state-to-control feedback map using synthetic data. This approach bypasses the need for iterative recomputation of optimal trajectories, which is replaced by real-time model evaluation. 
\end{itemize}
The rest of the paper is organized as follows. In Section 2, we introduce nilpotent linear time-invariant systems with a chain of integrators as a prototypical case, and we provide a characterization of the time-optimal control problem as a polynomial system for the switching times of the control sequence. In Section 3, we present the deflated Newton's method we apply to find all possible roots of the polynomial system. In Section 4, we introduce the Hermite quadratic form and its use to determine the existence of non-negative real roots of a system of polynomial equations. In Section 5, we discuss the approximation of a binary classifier via supervised learning with deep neural networks and synthetic data generated from sampling polynomial systems for different initial conditions. In Section 6, we assess the performance and robustness of the proposed approach on chains of integrators up to dimension 5. Section 7 presents concluding remarks and future research directions.

 \section{Time-optimal control of nilpotent systems}
We study the linear time-optimal control problem
\begin{align}\label{eq:linearSystem}
 \underset{u\in\mathcal{U}}{\min} \;\;T\quad 
 \text{subject to} \quad
 &\begin{cases}\dot{\bm{y}}(t)=A\bm{y}(t)+\bm{b}u(t)\\
 \bm{y}(0)=\bm{x}\\
 \bm{y}(T)=\bm{0},
 \end{cases}
\end{align}
where $\mathcal{U}\equiv L^{\infty}\bigl([0,+\infty);[-1,1]\bigr)$, $A$ is an $n\times n$ real matrix, $\bm{b}\in\RR^n$, and $\bm{x}\in\RR^n$ a given initial condition. We begin by recalling a well-known characterization of the solution to time-optimal control problem \eqref{eq:linearSystem}.

\begin{theorem}[Chapter III, \cite{PMP}]\label{switching} Assume that Kalman's controllability condition holds for the pair (A,$\bm{b}$), and that all the eigenvalues of $A$ are real. Then, for every initial condition $\bm{x}$, there exists a unique time-optimal control signal $u$. This control signal is bang-bang, i.e. taking values in $\{-1,1\}$, with at most $n-1$ switches. 
\end{theorem}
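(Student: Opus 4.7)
The plan is to invoke Pontryagin's Maximum Principle (PMP) on \eqref{eq:linearSystem} to reduce time-optimal synthesis to counting real zeros of a scalar switching function, and then exploit the real-eigenvalue hypothesis to bound that count. Existence of an optimal pair can be established by a standard Filippov-type compactness argument: the reachable set $R(T)=\{e^{AT}\bm{x}+\int_{0}^{T}e^{A(T-s)}\bm{b}u(s)\,ds:u\in\mathcal{U}\}$ is convex, compact in $\RR^{n}$, and depends continuously on $T$. Kalman controllability ensures $\bm{0}\in R(T)$ for some finite $T>0$, so the infimum $T^{*}$ of admissible transfer times is attained.

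Next I would apply the PMP, which supplies a nontrivial costate $\bm{p}(t)=e^{-A^{\top}t}\bm{p}(0)$ and a pointwise maximization of the Hamiltonian $H=\bm{p}^{\top}(A\bm{y}+\bm{b}u)$ over $u\in[-1,1]$. Since $H$ is linear in $u$, any optimal control satisfies $u^{*}(t)=\Sign(\phi(t))$ wherever the switching function $\phi(t):=\bm{p}(0)^{\top}e^{-At}\bm{b}$ is nonzero. To rule out singular arcs I would show that $\phi$ cannot vanish on any subinterval: repeated differentiation at such a point yields $\bm{p}(0)^{\top}A^{k}\bm{b}=0$ for $k=0,\ldots,n-1$, contradicting the full rank of the Kalman controllability matrix $[\bm{b},A\bm{b},\ldots,A^{n-1}\bm{b}]$. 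Hence every optimal control is bang-bang almost everywhere, and uniqueness follows by a convexity argument: were $u_{1},u_{2}$ two distinct optimal controls, their average $(u_{1}+u_{2})/2$ would again steer $\bm{x}$ to $\bm{0}$ in time $T^{*}$ by linearity of the dynamics, contradicting the bang-bang property on the set where $u_{1}\neq u_{2}$.

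The crucial step is the switch count, where the real-eigenvalue hypothesis enters. The scalar function $\phi$ lies in the $n$-dimensional solution space of the linear ODE whose characteristic polynomial is, up to sign, $\det(\lambda I-A)$. Under the real-eigenvalue assumption, $\phi$ is therefore a real linear combination of quasi-monomials $t^{j}e^{\lambda_{k}t}$ with real $\lambda_{k}$ and multiplicities summing to $n$. A classical P\'olya--Szeg\H{o} lemma, provable by induction on $n$ by dividing out an exponential factor and iteratively applying Rolle's theorem, implies that any nontrivial such $\phi$ has at most $n-1$ real zeros. Consequently $\Sign(\phi)$ changes at most $n-1$ times, yielding the claimed bound on switches.

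I anticipate the main obstacle to be the careful formulation of the real-zero count for quasi-polynomials with real exponents, particularly the book-keeping required when repeated eigenvalues force the inductive step to absorb multiplicities through a telescoping application of Rolle's theorem. Without the real spectrum assumption, $\phi$ can oscillate via sinusoidal contributions and the $n-1$ bound fails, so this hypothesis is genuinely load-bearing rather than decorative. Once the zero-count is in place, the bang-bang property, uniqueness, and switch bound all follow from standard PMP machinery combined with the normality supplied by Kalman controllability with a scalar input.
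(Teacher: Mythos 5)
Your proposal reconstructs the classical argument from Pontryagin et al.\ (Chapter III), which is exactly what the paper relies on: Theorem \ref{switching} is stated by citation and the paper supplies no proof of its own. The core of your argument is sound and matches the cited source: the PMP gives the switching function $\phi(t)=\bm{p}(0)^{\top}e^{-At}\bm{b}$; the Kalman rank condition rules out $\phi$ vanishing on a subinterval (hence no singular arcs); uniqueness follows from averaging two putative optimal controls and observing that the average must again be bang-bang, which is impossible where they differ; and the bound of $n-1$ sign changes follows from the fact that a nontrivial real quasi-polynomial $\sum_k q_k(t)e^{\lambda_k t}$ with real exponents and $\sum_k(\deg q_k+1)\leq n$ has at most $n-1$ real zeros, proved by dividing out an exponential factor and iterating Rolle's theorem. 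You are also right that the real-spectrum hypothesis is load-bearing precisely in that last step.

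The one step that does not hold as written is the existence claim: Kalman controllability of the pair $(A,\bm{b})$ does \emph{not} ensure that $\bm{0}\in R(T)$ for some finite $T$ once the control is constrained to $[-1,1]$. Take $n=1$, $\dot{y}=y+u$, $x=2$: the pair is controllable, yet $y(t)\geq e^{t}+1>0$ for every admissible $u$, so the origin is never reached and no time-optimal control exists. Null-controllability from every initial state with bounded controls requires, in addition, that all eigenvalues of $A$ have nonpositive real part (or, as in Pontryagin's existence theorem, one simply assumes that some admissible control steers $\bm{x}$ to $\bm{0}$ and minimizes over that nonempty set). This imprecision is arguably inherited from the statement of Theorem \ref{switching} itself, and it is harmless in the setting the paper actually uses, since there $A$ is nilpotent and all eigenvalues vanish; but your existence paragraph should either add the reachability hypothesis explicitly or invoke the spectral condition before appealing to compactness of the reachable sets.
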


The result above motivates a parametrization of the total time $T=t_1+\cdots+t_n$ for non-negative $t_i$'s
, which determine the switching times of the control signal. 
 The time domain $\Omega$ is the union of subintervals $\Omega_i=[T_{i-1},T_i)$, with $T_0=0$, and $T_i=\sum_{j=1}^i t_j$, for every $1\leq i\leq n-1$, and $\Omega_n=[T_{n-1},T_n]$. This induces a further parametrization of the optimal control signal $u$ as a piecewise-constant function in time, taking the value $u(0)=\pm 1$ in $\Omega_1$, and $(-1)^{i-1}u(0)$ in the interval $\Omega_i$. When a control has fewer than $n-1$ switches, this coincides with $t_i=0$ for some values of $i=1,\ldots,n$. When $t_i=0$, we define $\Omega_i=\emptyset$. We study the case where the matrix $A$ is a nilpotent matrix with nilpotency index $n$. Here, $A=(A_{ij})_{i,j}$ is similar to an $n\times n$ nilpotent Jordan block and hence, without loss of generality, we assume that $A_{ij}=\delta_{i,j-1}$, for $i,j=1, \dots, n$, where $\delta$ is the Kronecker delta. This covers a class of relevant problems, such as $n$-th order integrators of the form
\begin{align*}
\dot y_1(t)&=y_2(t)\\
&\vdots\\
\dot y_{n-1}(t)&=y_n(t)\\
\dot y_n(t)&=u(t)\,,
\end{align*}
and allows the following characterization of the optimal control signal.
\begin{proposition}\label{prop:21}
 Let $A$ be an $n\times n$ nilpotent Jordan block and $\bm{b}\in\R^n$ satisfying the assumptions in Theorem \ref{switching}. Given an initial condition $\bm{x}$, the non-negative increments $\{t_i\}_{i=1}^n$ such that $T=t_1+\cdots+t_n$, associated to the time-optimal control signal $u(t)=u_i$, for all $t \in \Omega_i$, with $u_i=\pm 1$, satisfy the polynomial system
 \begin{equation}\label{eq:polynomialSystem2}
 0=-u(0){\sum_{k=1}^{n-i+1} b_{k+i-1}}\sum_{\substack{\alpha_1,\ldots,\alpha_n\geq0\\
 \alpha_1+\cdots+\alpha_n=k}}(-1)^{\max \{j\colon \alpha_j\neq0\}}\frac{t_1^{\alpha_1}\cdots t_n^{\alpha_n}}{\alpha_1!\cdots\alpha_n!}+{\sum_{k=i}^{n}\frac{T^{k-i}}{(k-i)!}x_{k}},
 \end{equation}
 for $i=1,\dots,n,$ where $b_j, x_j$ denote the coordinates of $\bm b$ and $\bm x$, respectively.
 \end{proposition}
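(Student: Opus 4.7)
\medskip

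\noindent\textbf{Proof proposal.} The plan is to solve the linear ODE explicitly via the variation-of-constants formula, exploit that $A$ is nilpotent so $e^{At}$ is a finite polynomial, and then impose the terminal condition $\bm{y}(T)=\bm{0}$ componentwise, turning each coordinate of the equation into one of the polynomial identities in \eqref{eq:polynomialSystem2}.

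\medskip

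\noindent\emph{Step 1: closed-form solution.} Start from
\[
\bm{y}(T)=e^{AT}\bm{x}+\int_0^T e^{A(T-s)}\bm{b}\,u(s)\,ds.
\]
Because $A^n=0$ and the entries of $A^\ell$ are $(A^\ell)_{ij}=\delta_{j,i+\ell}$, one has $(e^{At})_{ij}=\tfrac{t^{j-i}}{(j-i)!}$ for $j\ge i$ and $0$ otherwise. Taking the $i$-th component of $\bm{y}(T)=\bm{0}$ already produces the drift-plus-initial-condition term $\sum_{k=i}^{n}\tfrac{T^{k-i}}{(k-i)!}x_k$, together with a sum, over $k\ge i$, of weighted integrals
\[
\sum_{k=i}^{n} b_k\int_0^T \frac{(T-s)^{k-i}}{(k-i)!}u(s)\,ds.
\]
Relabelling by $m=k-i+1$ (equivalently $k\mapsto k+i-1$) aligns the outer summation index with the one in \eqref{eq:polynomialSystem2}.

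\medskip

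\noindent\emph{Step 2: evaluating the control integral.} Using the parametrization of the control, $u(s)=(-1)^{j-1}u(0)$ on $\Omega_j=[T_{j-1},T_j)$, and integrating $(T-s)^{m-1}/(m-1)!$ on each $\Omega_j$, the integral becomes
\[
\int_0^T\!\frac{(T-s)^{m-1}}{(m-1)!}u(s)\,ds
=\frac{u(0)}{m!}\sum_{j=1}^{n}(-1)^{j-1}\bigl[S_j^{\,m}-S_{j+1}^{\,m}\bigr],
\qquad S_j\coloneqq\sum_{\ell=j}^{n}t_\ell,
\]
with the convention $S_{n+1}=0$. This is the key finite-sum identity to unpack.

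\medskip

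\noindent\emph{Step 3: multinomial expansion and the combinatorial identity.} Apply the multinomial theorem to $S_j^{\,m}=(t_j+\cdots+t_n)^m$: the monomials appearing are exactly those multi-indices $\alpha=(\alpha_1,\dots,\alpha_n)$ of weight $m$ whose first nonzero coordinate has index $\ge j$. Hence $S_j^{\,m}-S_{j+1}^{\,m}$ collects precisely the monomials with $\min\{j:\alpha_j\neq 0\}=j$. Substituting this into the telescoped sum and swapping the order of summation yields a single sum over $|\alpha|=m$, with sign $(-1)^{j-1}$ depending only on the relevant extremal index $j$ of the support of $\alpha$. This is the step I expect to be the main obstacle, since the sign pattern on the monomials must match the one in \eqref{eq:polynomialSystem2} exactly; one has to keep careful track of the telescoping and the overall factor $-u(0)$.

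\medskip

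\noindent\emph{Step 4: assembling the system.} Reinserting the resulting closed form for each integral into the $i$-th coordinate of $\bm{y}(T)=\bm{0}$, multiplying by $b_k$ and summing over $k=i,\dots,n$, rewriting as the index $k$ in \eqref{eq:polynomialSystem2} (which corresponds to my $m$), gives the required expression with the drift term $\sum_{k=i}^{n}T^{k-i}x_k/(k-i)!$ on one side and the control contribution on the other. Running this for each $i=1,\dots,n$ produces the $n$ polynomial equations in the non-negative unknowns $t_1,\dots,t_n$ asserted in the proposition; uniqueness of $u(0)$ (up to sign) and of the switching sequence follows from Theorem~\ref{switching}, so no additional root-selection is required at this stage.
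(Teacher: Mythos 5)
Your overall route is sound and is essentially a variant of the paper's argument: both start from the variation-of-constants formula and reduce the terminal condition $\bm{y}(T)=\bm{0}$ to a multinomial identity in the switching increments. The structural difference is that the paper integrates by parts $n$ times, rewriting the convolution as $\sum_k A^{k-1}\bm b\, U_k(T)$ with $U_k$ the iterated integrals of the control, and then expands the \emph{forward} partial sums $T_i=t_1+\cdots+t_i$, whereas you expand the kernel $e^{A(T-s)}$ directly (legitimate, since $A$ is nilpotent) and are led to the \emph{backward} partial sums $S_j=t_j+\cdots+t_n$. Your Steps 1 and 2 are correct, including the identity $\int_0^T\frac{(T-s)^{m-1}}{(m-1)!}u(s)\,ds=\frac{u(0)}{m!}\sum_{j}(-1)^{j-1}\bigl(S_j^{\,m}-S_{j+1}^{\,m}\bigr)$.

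The place you flagged as the main obstacle is exactly where your computation and the stated proposition part ways, and you should be aware that the discrepancy is real, not a bookkeeping slip on your end. Since $S_j^{\,m}-S_{j+1}^{\,m}$ collects precisely the monomials whose support has \emph{minimum} equal to $j$, your telescoping yields the sign $(-1)^{\min\{j\colon\alpha_j\neq0\}-1}$, i.e.\ a control term of the form $-u(0)\sum_{|\alpha|=m}(-1)^{\min\{j\colon\alpha_j\neq0\}}t^{\alpha}/\alpha!$, whereas \eqref{eq:polynomialSystem2} carries $(-1)^{\max\{j\colon\alpha_j\neq0\}}$. These disagree on every monomial whose support has endpoints of different parity, e.g.\ $t_1t_2$. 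A direct evaluation of $\int_0^T(T-s)u(s)\,ds$ for $n=3$, as well as comparison with Example \ref{ex:1} and with the $n=2$ system \eqref{eq:2dtest}, confirms that the $\min$ version you obtain is the correct one; the $\max$ in the statement traces back to the step of the paper's proof in which $U_{k-1}(\tau)$ is replaced by $(-1)^{i-1}u(0)\tau^{k-1}/(k-1)!$ on $\Omega_i$, which fails for $k\geq 2$ because the iterated integral on $\Omega_i$ retains contributions from the earlier subintervals. So your argument, once Step 3 is written out in full, proves the proposition with $\max$ replaced by $\min$ (the form actually used in the paper's examples), not the literal statement; you should state this correction explicitly rather than trying to force your sign pattern to match \eqref{eq:polynomialSystem2}.
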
 
 \begin{proof}
 First, recall that for a given control signal, the trajectory associated to the linear system in \eqref{eq:linearSystem} is given by 
 \begin{equation*}
 \bm{y}(t)=e^{tA}\bm{y}(0)+e^{tA}\int_{0}^{t}e^{-\tau A}\bm{b}\,u(\tau)\,d\tau. 
 \end{equation*} 
Evaluating at $T=t_1+\cdots+t_n$, and imposing that $\bm{y}(T)=\bm{0}$ implies
 \[\bm{0}=e^{TA}\bm{x}+e^{TA}\int_{0}^{T}e^{-\tau A}\bm{b}\,u(\tau)\, d\tau,
 \]
 and integration by parts yields
 \begin{align}
 \bm{0}&=e^{AT}\bm{x}+e^{TA}\sum_{k=1}^{n} \bigl(A^{k-1}\, e^{-T A}\bm b\, U_k(T)-A^{k-1}\bm b\,U_k(0)\bigr)=e^{TA}\bm{x}+\sum_{k=1}^{n} A^{k-1}\,\bm{b}\,U_k(T),\label{eq:eq}
 \end{align}
 where 
 \[U_{k}(t)=\int_0^t U_{k-1}(\tau)\, d\tau, \text{ for\; } i\geq 1, \text{ with\; } U_1(t)=\int_{0}^t u(\tau) d\tau.
 \]
 Since $u$ is piecewise constant on the partition of $\Omega$ determined by the $t_i$'s, 
 \[U_1(T)=u(0)\sum_{i=1}^{n}\int_{T_{i-1}}^{T_i} (-1)^{i-1}d\tau=u(0)(t_1-t_2+\dots(-1)^{n-1}t_n),\] 
 where, as above, $T_i=t_1+\cdots+t_i$. 
 For any $1\leq k\leq n-1$, we have
 \begin{align*} 
 U_{k}(T)&=\int_{0}^{T}U_{k-1}(\tau)d\tau=\sum_{i=1}^{n}\int_{T_{i-1}}^{T_i}U_{k-1}(\tau)d\tau
 =
 u(0)\sum_{i=1}^n\int_{T_{i-1}}^{T_i} (-1)^{i-1}\frac{\tau^{k-1}}{(k-1)!}d\tau
 \\
 &= u(0)\sum_{i=1}^n (-1)^{i-1}\biggl[\frac{(t_i+T_{i-1})^k}{k!}-\frac{(T_{i-1})^k}{k!}\biggr]= u(0)\sum_{i=1}^n\sum_{j=1}^k\frac{(-1)^{i-1}}{(k-j)!j!}t_i^jT_{i-1}^{k-j}.\nonumber
 \end{align*}
 Since
 \[\left(t_1+\cdots+t_{i-1}\right)^{k-j}
 =
 \sum_{\substack{\alpha_1+\cdots+\alpha_{i-1}=k-j\\ \alpha_{1},\dots,\alpha_{i-1}\geq 0}}\frac{(k-j)!}{\alpha_{1}! \alpha_{2}!\cdots \alpha_{i-1}!}t_1^{\alpha_1} \cdots t_{i-1}^{\alpha_{i-1}},
 \]
 then
 \begin{align}
 U_{k}(T)&=
 u(0)\sum_{i=1}^n\sum_{\substack{\alpha_1+\cdots+\alpha_{i}=k\\\alpha_i,\dots,\alpha_{i-1}\geq 0,\,\alpha_i>0}}\frac{(-1)^{i-1}}{\alpha_{1}!\cdots \alpha_{i}!}t_1^{\alpha_1} \cdots t_{i}^{\alpha_{i}}\nonumber\\
 &=
 -u(0)\sum_{\substack{\alpha_1+\dots+\alpha_n=k\\\alpha_1,\ldots,\alpha_n\geq0}}\frac{(-1)^{\max\{j\colon \alpha_j\neq 0\}}}{\alpha_{1}! \cdots \alpha_{n}!}t_1^{\alpha_1}\cdots t_n^{\alpha_n}.\label{eq:sum}
 \end{align}
 On the other hand, since $A$ is a nilpotent Jordan block, we have $A_{ij}=\delta_{i,j-1}$, for $i,j=1, \dots, n$.
 Then, for $1\leq k\leq n$ we have $(A^{k-1})_{i,j}=\delta_{i,i+k-1}$. Thus, $\bigl( A^{k-1}\bm{b}\bigr)_{i}=b_{k+i-1}$ for $i=1,\dots, n-k+1$, and $\bigl( A^{k-1}\bm{b}\bigr)_{i}=0$ otherwise.
 
 Finally, replacing \eqref{eq:sum} in \eqref{eq:eq}, and noting that the $i$-th component of $e^{TA}\bm{x}$ is given by
 \begin{equation*}
 (e^{TA}\bm{x})_i=\sum_{k=1}^{n}(e^{TA})_{i,k}{x}_k=\sum_{k=i}^{n}\frac{T^{k-i}}{(k-i)!}{x}_k,
 \end{equation*} 
 we obtain the equality \eqref{eq:polynomialSystem2}, as required.
 \end{proof}
\begin{remark}
Note that the control signal $u(t)$ need not be optimal for the proof 
of Proposition~\ref{prop:21} to hold, as it only 
requires that $u(t)$ is parametrized by the non-negative increments 
$\{t_i\}_{i=1}^{n}$ and $u(0)$. The connection to optimality is 
provided by Theorem~\ref{switching}, which guarantees that the 
time-optimal control is bang-bang with at most $n-1$ switches and 
hence admits such a parametrization. Moreover, for the origin as 
target point, the sets of states reachable with $u(0)=1$ and $u(0)=-1$ 
are disjoint up to a set of measure zero \cite{patil}, so that for 
almost every $x$, there exists a unique $u(0)\in\{-1,+1\}$ for 
which~\eqref{eq:polynomialSystem2} admits a non-negative solution. 
Among all non-negative solutions, the time-optimal one minimizes 
$T = t_1 + \cdots + t_n$.
\end{remark}
 \begin{example}\label{ex:1}
 In $\R^3$, consider the vector $\bm{b}=(0,0,1)^\top $. A solution of \eqref{eq:linearSystem} leads to the system of polynomial equations: 
 \begin{align*}
 0&=u(0)(t_1-t_2+t_3)+x_3,\\
 0&=u(0)\left(\frac{t_1^2}{2}+t_1t_2+t_1t_3-\frac{t_2^2}{2}-t_2t_3+\frac{t_3^2}{2}\right)+x_2+(t_1+t_2+t_3)x_3,\\
 0&=u(0)\left(\frac{t_3^3}{6}+\frac{t_1^2t_2}{2}+\frac{t_1^2t_3}{2}+\frac{t_1t_2^2}{2}+t_1t_2t_3+\frac{t_1t_3^2}{2}-\frac{t_2^3}{6}-\frac{t_2^2t_3}{2}-\frac{t_2t_3^2}{2}+\frac{t_3^3}{6}\right)\\
 &\;\;\;\;\;\;+\;x_1+(t_1+t_2+t_3)x_2+\frac{(t_1+t_2+t_3)^2}{2}x_3,
 \end{align*}
 and $u(0)=\pm 1$.\hfill$\diamond$
 \end{example}

With no loss of generality\footnote{it is well-known that for scalar control systems in Jordan form, controllability is equivalent to requiring $b_n\neq 0$ \cite{CD68}.}, we will henceforth assume $\bm{b} = (0, \dots, 0, 1)^\top$, leading to a simplified version of \eqref{eq:polynomialSystem2} 
\begin{equation}\label{eq:polynomialSystemB0}
0=-u(0)\sum_{\substack{\alpha_1,\ldots,\alpha_n\geq0\\
\alpha_1+\cdots+\alpha_n=n-i+1}}(-1)^{\max \{j\colon \alpha_j\neq0\}}\frac{t_1^{\alpha_1}\cdots t_n^{\alpha_n}}{\alpha_1!\cdots\alpha_n!}+{\sum_{k=i}^{n}\frac{(t_1+\cdots+t_n)^{k-i}}{(k-i)!}x_{k}},
\end{equation}
 for $i=1,\ldots,n$. This system characterizes the optimal control signal in the following way. For every initial condition $\bm{x}$, there exists a unique optimal control signal, which is determined by the choice of the initial control $u(0)=\pm 1$, and the existence of a set of non-negative $t_i$'s related to that choice. In other words, if a non-negative sequence of $t_i$'s is found for a given $\bm{x}$ and $u(0)$, then the switching sequence is the time-optimal solution to the problem. Otherwise, we are guaranteed a unique non-negative solution to \eqref{eq:polynomialSystemB0} by taking $-u(0)$ as the starting element of the sequence and computing the associated switching times. In the following section, we turn our attention to Newton's method to find roots of the polynomial system.

\section{A deflated Newton's Method}\label{newt}
 
In this section, we present a numerical technique that combines Newton's method with a deflation routine to determine all possible solutions of the polynomial system \eqref{eq:polynomialSystemB0}. 

We denote by $f_1,\dots,f_n$ the polynomials in $\bm t=(t_1,\dots, t_n)$ from \eqref{eq:polynomialSystemB0}, which reads
\begin{equation}\label{F}
\calF (\bm t)= \bigl(
 f_1 (\bm t),
 \dots,
 f_n (\bm t)\bigr)^\top=\bm{0}.
 \end{equation}
Given an initial guess $\bm{t}_0$, Newton's method generates a sequence 
\begin{align}\label{nsys}
\bm{t}_{h+1}=\bm{t}_{h}-\calF'(\bm{t}_{h})^{-1}\calF(\bm{t}_{h})
\end{align}
where $\mathcal{F}'$ denotes the Jacobian of $\mathcal{F}$. Assuming that this iteration converges to a root of \eqref{eq:polynomialSystemB0}, we have no guarantees that it will converge to a non-negative solution and, unless all solutions have been found, this cannot be interpreted as non-existence of non-negative solutions. Hence, we resort to a deflated Newton method to find all possible roots, which we describe in the following.

When using an iterative algorithm to find the roots of a function $\mathcal{F}(\bm{t})$, one can search for new, distinct roots of $\mathcal{F}(\bm{t})$ by applying the same iterative algorithm to a new deflated function $\mathcal{G}( {\bm t})$ whose zeros coincide with the roots of $\mathcal{F}( {\bm t})$, except for those already found.
Specifically, if $\mathcal{F}(\bm{r})=\bm{0}$, for some $\bm r\in\R^n$, we define
\begin{equation*}
\mathcal{G}(\bm{t}) =\mathcal{M}(\bm{t}, \bm{r})\mathcal{F}(\bm{t}),
\end{equation*}
where $\mathcal{M}(\bm{t}, \bm{r} )$ is the \emph{deflation matrix}, which is invertible for all $\bm{t}\in \mathbb{R}^n\setminus\{\bm{r}\}$, and $\mathcal{M}(\bm{t}, \bm{r} )\mathcal{F}(\bm{t}) = \bm{0}$ if and only if $\mathcal{F}(\bm{t})=\bm{0}$. 
This involves systematically altering the residual of the polynomial system to eliminate solutions that have already been identified. In this work, we consider 
\begin{equation}\label{defl}
\mathcal{G}(\bm{t}) = \bigg(\mathbb{I}\frac{1}{\eta(\bm{t})} + \xi\mathbb{I}\bigg)\mathcal{F}(\bm{t})\,,\; \text{for } \eta(\bm{t}) = \|\bm{t} - \bm{r} \|_2^p\,,
\end{equation}
 where $\mathbb{I}\in\mathbb{R}^{n\times n}$ is the identity matrix, {the \emph{deflation power} $p$ is a positive integer, and the \emph{deflated residual} $\xi$ is a non-negative real number. }
 The \emph{deflating term} $\eta(\bm{t})^{-1}$ ensures that $\mathcal{G}( \bm{r} )\neq \bm{0}$, while the shift $\xi\geq0$ forces the deflated residual to not vanish artificially as $ \|\bm{t} - \bm{r} \|_2\to\infty$. 
 
 The use of Newton's method requires the computation of the derivative of $\mathcal{G}(\bm{t})$. This can be done efficiently both in memory and computational cost by leveraging the information about the previous deflation iterates. 
 If $\bm{r}_0,\dots,\bm{r}_{m-1}$ are roots of $\mathcal{F}$, at the {$m$-th} iteration, the deflation term reads
 \begin{equation*}
 \eta_m = \eta_{m-1}(\bm{t})\tau(\bm{t})\,,\; \text{ with\; }
 \eta_{m-1}(\bm{t}) = \prod\limits_{i=0}^{m-1}\|\bm{t}- \bm{r}_i \|_2^p\,,
 \end{equation*}
 for a temporal variable $\tau(\bm{t}) = \|\bm{t}- \bm{r}_m \|_2^p$.
 The derivative of the polynomial system after $m$ deflations is given by
 \begin{equation}\label{d_defl}
 \mathcal{G}' = \dfrac{\mathcal{F}'}{\eta_{m}} - \dfrac{\mathcal{F}}{\eta_{m}^2}\otimes\big(\eta_{m-1}'\,\tau + \eta_{m-1}\,\tau'\big)\,.
 \end{equation} 
For the time-optimal polynomial system \eqref{eq:polynomialSystemB0}, the partial derivative of the $i$-th component $f_i$ of $\calF$ with respect to the switching time $t_q$ is given by the polynomial
\begin{equation*}
\partial_{t_q}f_i=-u(0)\sum_{\substack{\alpha_1,\ldots,\alpha_n\geq0, \, \alpha_q\neq 0\\\alpha_1+\cdots+\alpha_n=n-i+1}}(-1)^{\max\{j\colon\alpha_j\neq0\}}\frac{t_1^{\alpha_1}\cdots t_q^{\alpha_q-1}\cdots t_n^{\alpha_n}}{\alpha_1!\cdots(\alpha_q-1)!\cdots\alpha_n!}+
\sum_{k=i+1}^{n}\frac{T^{k-i-1}}{(k-i-1)!}x_k.
\end{equation*}
 A pseudocode for this routine is provided in Algorithm \ref{alg:deflation}. A crucial consideration for the deflation algorithm is the selection of suitable stopping criteria, as once all roots have been found, the algorithm will naturally fail to converge. However, numerical instability can also prevent convergence as the number of roots in the deflation operator increases. Therefore, establishing a bound on the number of real solutions in our polynomial system is fundamental. The next section explores this bound by drawing from real algebraic geometry and utilizing the Hermite quadratic form.

 \begin{algorithm}[ht!]
 \caption{Deflated Newton}
 \label{alg:deflation}
 \KwData{Function $\mathcal F$ and its derivative $\mathcal F'$, initial $\texttt{guess}$, expected number of roots $n_{\texttt{roots}}$, deflation parameters $p$, $\xi$, convergence tolerance $\varepsilon\ll1$}
 
 $\texttt{max\_iteration}=10^3$, $\texttt{solutions} \gets \{\}$\Comment*[r]{initializing}
 $\eta_0(*) \gets 1$, $ \eta'_0(*) \gets 0$, $\tau(*) \gets 1$, $ \tau'(*) \gets 0$ \Comment*[r]{deflation operators}
 $m \gets 0$ \Comment*[r]{count of identified roots}
 \While{$m<n_{\texttt{roots}}$}{
 $\bm{t} \gets \texttt{guess}$ \Comment*[r]{initial guess}
 $\texttt{update}\gg1$, $\texttt{iteration} \gets 0$\Comment*[r]{reset convergence flags}
 \While{$\texttt{update} > \varepsilon$ and $
 \texttt{iteration} < \texttt{max\_iteration}$}{
 $\texttt{iteration}\, += \,1$;\\
 $f \gets \mathcal F(\bm{t})$, $df \gets \mathcal F'(\bm{t})$;\\
 $\mathcal G \gets \eqref{defl},\;\mathcal G' \gets \eqref{d_defl}$\Comment*[r]{compute deflated system}
 $\bm{t}_{\text{new}} \gets \bm{t} - \texttt{pinv}(\mathcal G')\cdot \mathcal G$\Comment*[r]{update root candidate}
 $\texttt{update} \gets \|\bm{t}_{\text{new}} - \bm{t}\|$;\\
 $\bm{t} \gets \bm{t}_{\text{new}}$;\\
}
 \If{$\texttt{iteration} = \texttt{max\_iteration}$}{
 $\textbf{break}$}
 \Else{
 $\texttt{solutions} \gets \texttt{solutions} \cup \{\bm{t}\}$ \Comment*[r]{store solution }
 $m\, += \,1$\\
 $\tau(*) \gets \|* - \bm{t}\|^p$ \Comment*[r]{Update deflation operators}
 $ \tau'(*) \gets p \cdot \|* - \bm{t}\|^{p-2} \cdot (* - \bm{t})$;\\
 $\eta_m(*) = \eta_{m-1}(*)\tau(*)$;\\
 $\eta'_{m}(*) = \eta'_{m-1}(*)\tau(*) +  \eta_{m-1}(*)\tau'(*)$;
 }}
 \KwResult{list $\texttt{solutions}$, containing the roots of $\mathcal{F}$ }
 \end{algorithm}

 \section{The Hermite Quadratic Form}\label{hermy}
 
We present the Hermite quadratic form method, as introduced in \cite{AlgorithmsBasuPollackRoy}, as a tool to determine the number of roots in the polynomial system \eqref{eq:polynomialSystemB0}. 
We denote by $R=\R[t_1,\dots,t_n]$ the ring of polynomials in $\bm{t}$ with real coefficients. 
Let 
\begin{equation}\label{eq:ideal}
 J=\langle f_i\colon i=1,\dots, n\rangle=\biggl\{\sum_{i=1}^ng_if_i\colon g_i\in R\biggr\}\subseteq R
\end{equation} 
be the ideal of $R$ generated by the polynomials $f_i\in R$ in \eqref{F}.
The ideal $J$ is said to be \emph{zero-dimensional} if the set
$R/J=\{g+J\colon g\in R\}$ 
of polynomials module $J$ forms a finite dimensional real vector space, where $g+J=\{g+f\in R\colon f\in J\}$, for any $g\in R$. 

To apply the Hermite quadratic form method, it is necessary to ensure that $R/J$ is a finite-dimensional vector space, which is equivalent to $J$ being zero-dimensional.
This property can be verified using a monomial ordering, which can be used either to establish certain properties of the monomials which are powers of the variables $t_i$ or to compute a Gröbner basis for the ideal $J$.
Alternatively, if we can verify that 
the polynomials $f_i$ generating $J$ have only a finite number of solutions in $\C$, this also implies that $J$ is zero-dimensional.
For further details on these methods, see, for example, \cite[§3, Chapter 5]{idealsVarietiesAlgorithms}. 
\begin{definition}[Hermite quadratic form]
If a polynomial system $\bigl\{f_i\bigr\}_{i=1}^n$ 
in $R$ has a finite number of solutions in $\C$, and $\mathcal{B}={\{\beta_1,\dots,\beta_r\}}$ is a basis of $R/J$, for $J$ as in \eqref{eq:ideal}, let $\mathcal{L}_{ij}(J)$ be the map on $R/J$ defined by
\begin{align*}
f+J&\mapsto {\beta_i\beta_j}f+J,
\end{align*}
for $1\leq i,j\leq r$. The \emph{Hermite quadratic form} of $J$ is the map 
on $R/J$ defined by 
the matrix $(\mathcal{H}(J))_{ij}=\trace(L_{ij}(J))$, where $L_{ij}(J)$ is the matrix of the linear map $\mathcal{L}_{ij}(J)$. In particular, $\calH(J)$ is a real symmetric matrix. 
\end{definition}
\begin{example}\label{exampleH}
Taking $n=2$ and $u(0)=1$ in \eqref{eq:polynomialSystemB0} leads to
\begin{align}
 f_1&=t_1-t_2+x_2,\label{eq:1}\\
 f_2&=\frac{t_1^2}{2}+t_1t_2-\frac{t_2^2}{2}+x_1+(t_1+t_2)x_2.\label{eq:2}
\end{align}
These are polynomials in $R=\R[t_1,t_2]$, and we take $J=\langle f_1,f_2\rangle$. This system has a finite number of solutions because if $ t_2=t_1+x_2$ by \eqref{eq:1}, then \eqref{eq:2} leads to a quadratic equation in $t_1$, which has at most two solutions. 
We see that $\mathcal{B} = \{1,t_2\}$ is a basis for the vector space $R/J$. Taking $\beta_1=1$, and $\beta_2=t_2$, then the map $\calL_{12}(J)$ is given by $1+J\mapsto t_2+J$, and $t_2+J\mapsto t_2^2+J= -x_1+\frac{1}{2}x_2^2 +J$. Thus, 
\[
L_{12}(J)=
\begin{pmatrix}
 0&-x_1+\frac{1}{2}x_2^2\\
 1&0
 \end{pmatrix},
\]
and so $\trace(L_{12}(J))=0$. 
Similarly, we compute $L_{ij}(J)$ for $i,j=1,2$, and we get the Hermite quadratic form of $J$, which is given by the matrix
$\mathcal{H}(J)=
\begin{pmatrix}
 2&0\\
 0&x_2^2-2x_1
\end{pmatrix}.$\hfill$\diamond$
\end{example}
In practice, for a given value of $\bm x$, we use \texttt{Macaulay2} \cite{M2} to compute the dimension of the ideal $J$ and verify if it is zero. The Hermite quadratic form for parametric systems of polynomial equations has been studied in \cite{PhuocLeSafeyElDinHermiteMatrices}. Here, it is shown that for a given parameter, it is possible to obtain the Hermite quadratic of the associated polynomial system, as long as the denominator of the parameter-augmented Hermite quadratic form does not vanish.

If the ideal $J$ generated by $\bigl\{f_i\bigr\}_{i=1}^n$ in \eqref{F}
is zero-dimensional, the Hermite quadratic form $\mathcal{H}(J)$ can be used to count the number of real roots of $\calF$ as follows. 
First, recall that the rank of the matrix $\calH(J)$, denoted $\Rank(\calH(J))$, is equal to the number of non-zero eigenvalues of $\calH(J)$ counted with multiplicity, and the \emph{signature}, denoted $\Sign(\calH(J))$, is the difference between the number of its positive and negative eigenvalues. 
Additionally, we denote by $\VV_{\RR}(J)$ and $\VV_{\CC}(J)$ the set of real and complex solutions of $\calF$, respectively. The following result, which is a special case of the one presented in \cite[Theorem 4.100]{AlgorithmsBasuPollackRoy}, establishes a link between the Hermite quadratic form $\calH(J)$ and the number of roots of $\calF$.
\begin{proposition}\label{RootCountResult}
Suppose the system of polynomial equations $\bigl\{f_i\bigr\}_{i=1}^n$ in $R$ has finitely many solutions. 
Let $J=\langle f_i\colon i=1,\dots, n\rangle$, and $\mathcal{H}(J)$ be the Hermite quadratic form of $J$. Then, 
\[ \Rank(\mathcal{H}(J))=\#\VV_\CC(J),\; \text{and\; }
 \Sign(\mathcal{H}(J))=\#\VV_\RR(J).
\]
\end{proposition}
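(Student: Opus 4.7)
The plan is to derive the result from a trace formula for multiplication operators on $R/J$, combined with a decomposition of the Hermite matrix into blocks corresponding to real roots and pairs of complex-conjugate roots. First I would establish that for every $h\in R$, writing $L_h$ for the matrix of multiplication by $h$ on $R/J$ in the basis $\mathcal{B}$,
\[ \trace(L_h) \;=\; \sum_{z\in \VV_\CC(J)} m_z\, h(z), \]
where $m_z\in\ZZ_{>0}$ is the multiplicity of $z$ as a complex zero of $\calF$. The argument uses that since $J$ is zero-dimensional, the Chinese Remainder Theorem applied after complexification gives $R/J\otimes_\RR\CC \cong \prod_{z\in\VV_\CC(J)} A_z$, where each $A_z$ is an Artinian local $\CC$-algebra of dimension $m_z$ with residue field $\CC$. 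Multiplication by $h$ acts block-diagonally, and on the block $A_z$ it has the form $h(z)\cdot\mathrm{Id} + N_z$ for some nilpotent $N_z$ (since $h-h(z)$ lies in the maximal ideal at $z$). Each block therefore contributes $m_z h(z)$ to the trace.

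Applying the formula with $h=\beta_i\beta_j$ gives the factorization $\calH(J) = V^\top D\,V$, where $V$ is the $\#\VV_\CC(J)\times r$ matrix with $V_{z,i}=\beta_i(z)$ and $D=\mathrm{diag}(m_z)$. Because $\mathcal{B}$ is a basis of $R/J$, the evaluation map factors through the semisimple quotient of $R/J\otimes\CC$, on which evaluation is an isomorphism by CRT; hence $V$ has full row rank $\#\VV_\CC(J)$. Since $D$ is invertible, this immediately yields $\Rank(\calH(J)) = \#\VV_\CC(J)$.

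For the signature, I would partition $\VV_\CC(J)$ into real roots and pairs of complex conjugates. A conjugate pair $\{z,\bar z\}$ (with $m_z=m_{\bar z}$, since $\calF$ has real coefficients) contributes $2m_z\bigl(a\,a^\top - b\,b^\top\bigr)$ to $\calH(J)$, where $a_i=\operatorname{Re}\beta_i(z)$ and $b_i=\operatorname{Im}\beta_i(z)$ — a rank-$2$ block of signature zero. A real root $z$ contributes the positive rank-$1$ block $m_z\,w\,w^\top$ with $w=(\beta_i(z))_i\in\RR^r$. Collecting these into a real factorization $\calH(J)=W D' W^\top$, where the columns of $W$ are the vectors $w$, $a$, $b$ and $D'$ is the corresponding real diagonal matrix, the columns of $W$ remain linearly independent, because passing from $(V_{z,\cdot},V_{\bar z,\cdot})$ to $(a,b)$ is an invertible real change of basis on $\CC^{\VV_\CC(J)}$ that preserves the rank of $V$. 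Sylvester's law of inertia then yields $\Sign(\calH(J))=\Sign(D')=\#\VV_\RR(J)$.

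The main obstacle is establishing the trace formula cleanly: it requires identifying geometric multiplicities with local ring dimensions and exploiting that $h-h(z)$ acts nilpotently on the primary component at $z$. Once that is in hand, the rank statement is immediate from Sylvester's law, and the signature count reduces to the bookkeeping over complex-conjugate pairs sketched above.
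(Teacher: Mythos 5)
The paper does not prove this proposition; it is stated as a special case of \cite[Theorem 4.100]{AlgorithmsBasuPollackRoy}, and your argument is a correct rendition of the standard proof of that theorem: the Stickelberger trace formula $\trace(L_h)=\sum_{z\in\VV_\CC(J)}m_z h(z)$, the factorization $\calH(J)=V^\top DV$ for the rank, and the regrouping of real roots and conjugate pairs into a real congruence $WD'W^\top$ plus Sylvester's law of inertia for the signature. The one step worth flagging, deducing $\Rank(V^\top DV)=\Rank(V)$ over $\CC$ with an ordinary (non-conjugate) transpose, does go through because $V$ has full row rank so $V^\top$ is injective, and in any case it also follows from your real factorization.
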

\begin{remark}
By Proposition \ref{RootCountResult}, the number of real roots of a polynomial system can be computed by counting the positive and negative real roots of the characteristic polynomial of the matrix $\mathcal{H}(J)$.
\end{remark}
\begin{example}
In Example \ref{exampleH}, the signature of $\mathcal{H}(J)$ is determined by the sign of the eigenvalue $x_2^2-2x_1$. 
If $x_2^2-2x_1>0$, then both eigenvalues of $\mathcal{H}(J)$ are positive, meaning that the system of polynomial equations has two real solutions. If $x_2^2-2x_1=0$, then the system has only one real solution. If $x_2^2-2x_1<0$, then $\Sign(\mathcal{H}(J))=0$, and the system has no real solutions. These three cases are illustrated in Figure \ref{fig:sign}\hfill$\diamond$
\begin{figure}[h]\label{geometryExample}
 \centering 
 \includegraphics[scale=0.34]{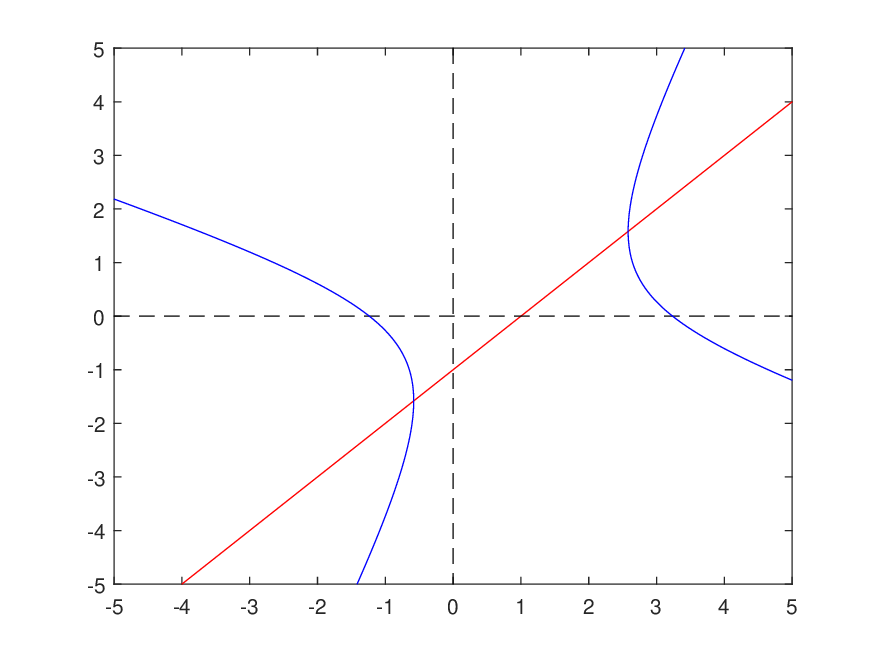}
 \includegraphics[scale=0.34]{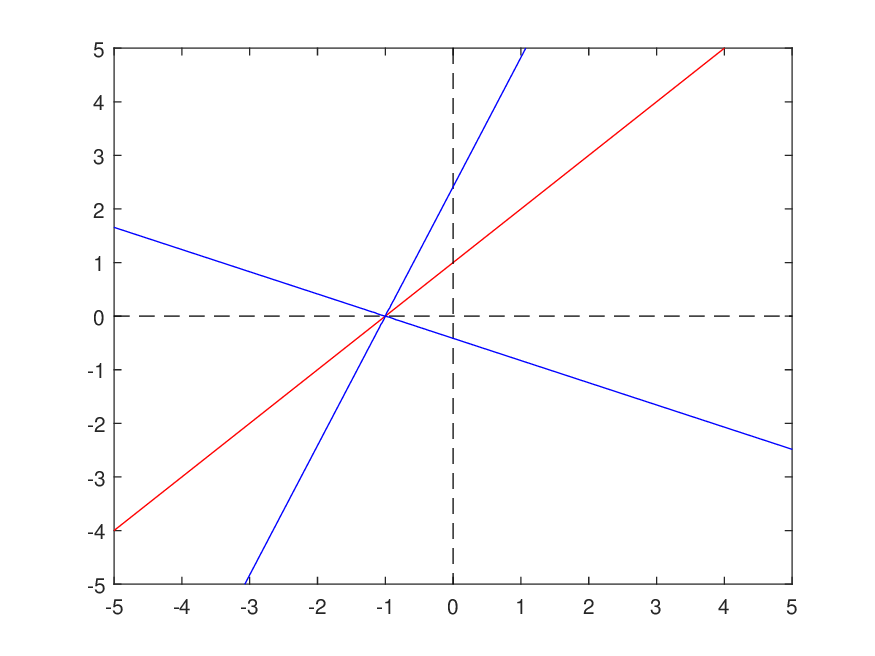}
 \includegraphics[scale=0.34]{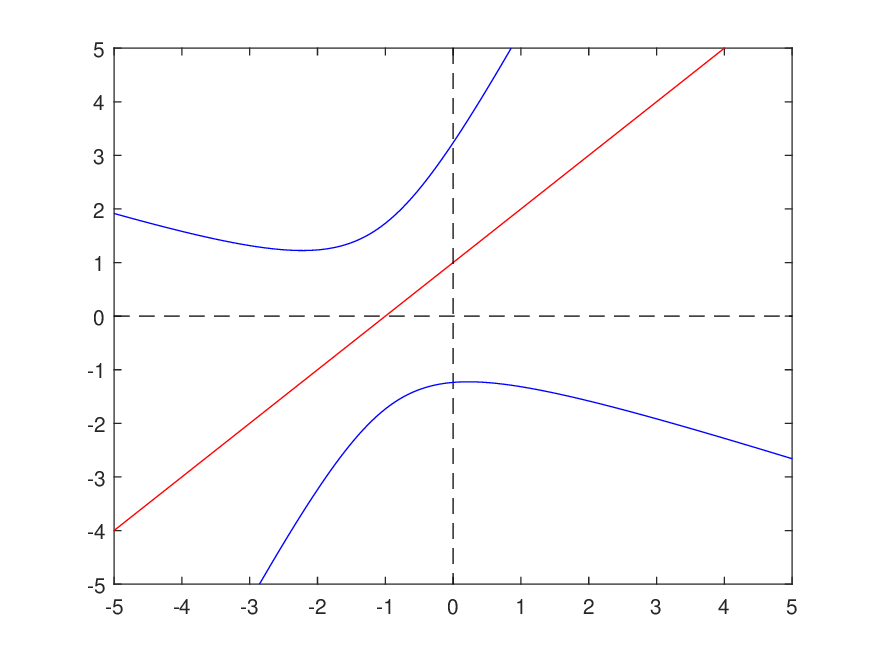}
 \caption{{Graph of the two dimensional polynomial system from Example \ref{exampleH}. The number of real solutions of the system depends on the sign of the diagonal elements in the matrix $\calH(J)$. We have three cases: $x_2^2-2x_1>0$ (left), $x_2^2-2x_1=0$ (middle), and $x_2^2-2x_1<0$ (right).}}\label{fig:sign}
\end{figure}
\end{example}
The Hermite quadratic form feeds into Algorithm \ref{alg:deflation} by providing $n_\text{roots}$, the expected number of real roots. We set $n_\text{roots}=\Sign(\mathcal{H}(J))$. This quantity is valid for fixed $\bm{x}$ and $u(0)$, and once $n_\text{roots}$ has been reached without finding a non-negative solution, we are guaranteed the existence of a non-negative solution by taking $-u(0)$.

All of our computations are completed in \texttt{Macaulay2} using the \texttt{RealRoots} package \cite{M2RealRoots}. The command ``$\mathrm{traceCount}(J)$" returns the signature of Hermite quadratic form of $J$. Within this computation, a Gröbner basis of $J$ is computed when finding a basis of $R/J$. As shown in \cite{GroebnerComplexity}, the algorithms to compute the Gröbner basis of the ideal $J$ will generate polynomials with total degree bounded above by $2(n^2/2+n)^{2^{n-1}}$. As a result, as we increase the number of variables $n$, the Gröbner basis computation will increase double-exponentially. In Section 6, we report results where the Hermite quadratic form has been computed for polynomial systems up to dimension 5, although computations have been successfully performed up to dimension 7. 

\section{Constructing a neural feedback law as a binary classifier}
The methodology developed so far generates open-loop controls, which are effective in an ideal setting where both system dynamics and initial conditions are exactly known. However, in real-world applications, model uncertainties and disturbances will lead to deviations from the optimal trajectory, requiring a re-computation of the optimal action. Such a computationally intensive task limits the applicability of such a synthesis procedure for real-time control. These issues are well understood in the control literature, particularly in the context of stabilizing control, and are typically mitigated by adopting a feedback control approach, which is inherently more robust. Time-optimal feedback controls are static maps that depend solely on the current state of the system and are evaluated as the system evolves. This adaptability allows them to correct deviations from the optimal trajectory, improving their robustness to uncertainties and disturbances.

In deterministic optimal control, the construction of an optimal feedback law is obtained via dynamic programming, leading to a static, first-order Hamilton-Jacobi-Bellman PDE for the value function of the control problem. The optimal feedback law is obtained as a by-product.  However, this method requires an accurate tracking of the discontinuous switching law, which can only be achieved using local, grid-based numerical methods, and hence it is limited to low-dimensional problems. In this section, we present an alternative synthesis method that circumvents the solution of the Hamilton-Jacobi-Bellman PDE by resorting to supervised learning. Here, we train a neural network using synthetic data from sampling open-loop, optimal trajectories obtained with the methodology presented in previous sections.  

\subsection{Feedforward Neural Network Classifier}
 Given the bang-bang (binary) nature of the control in the problem under consideration, we frame the feedback approximation as a classification task rather than relying on regression. This means that the output of our model will be of discrete nature in $\{-1,1\}$. 
 
 We consider a class of models within the family of feedforward neural networks (NN), characterized by a sequential composition of nonlinear activation functions applied component-wise to affine transformations of the inputs. 
 If we define the $m$-th layer as $l_m(\bm{z}_m)=\sigma_m\left(A_m \bm{z}_m+\bm b_m\right)$, the parametric model reads
 \begin{equation*}
 u_\theta(\bm{z}) = l_M \circ \ldots \circ l_2 \circ l_1(\bm{z})\,,
 \end{equation*}
 with trainable parameters $\{A_m,\bm b_m\}_{m=1}^{M-1}$, $A_m\in\mathbb{R}^{n_{m-1}\times n_m}$ and $\bm b_m\in\mathbb{R}^{n_m}$ being the weight matrix and bias vector of the $m$-th layer respectively, which has $n_m$ neurons. Here, the nonlinear activation function $\sigma_m$, $m=1,\cdots,M-1,$ is applied component-wise to the layer input variable $\bm{z}_m\in\mathbb{R}^{n_{m-1}}$. We fix the final layer to be sigmoid 
 \begin{equation*}
 l_M(\bm{z}_M) = \dfrac{1}{1 + e^{-\bm{z}_M}},
 \end{equation*}
 where, as before, the function is applied component-wise to the entries of the layer input $\bm{z}_M$. This function maps any real-valued input into the range $(0,1)$, making it particularly suitable for binary classification tasks, as the output can be interpreted as a probability:
\begin{equation}\label{net_prob}
\mathbb{P}\big[u(\bm{z})=1\big] = u_\theta (\bm{z}), \;\text{and\; }\mathbb{P}\big[u(\bm{z})=-1\big] =1 - u_\theta (\bm{z}).
\end{equation}
 By denoting the approximation of the feedback control as $\tilde{u}(\bm{z})\approx u(\bm{z})$, we define it as the most likely event under the probability $u_\theta$:
\begin{equation*}
\tilde{u}(\bm{z}) = 
\begin{cases} 
1, & \text{if } u_\theta(\bm{z}) \geq 0.5, \\
-1, & \text{otherwise}.
\end{cases}
\end{equation*}
This formulation not only provides a classification decision but also quantifies the confidence of the model in the forecast, in the sense that the farther \(u_\theta(\bm{z})\) is from the decision threshold \(0.5\), the higher the model’s confidence in its classification.
 
\subsection{Synthetic Data Generation}
For the generation of the training and testing datasets, we utilize the deflated Newton's method discussed in Section \ref{newt} and solve the polynomial system in \eqref{eq:polynomialSystemB0}. We begin by generating $N_s$ sample points in the state space, denoted by $\bigl\{\bm{x}^{(i)}\bigr\}_{i=1}^{N_s}$. For each initial condition, we evaluate the solvability of the polynomial system under the two possible initial control values, $u_0 = 1$ and $u_0 = -1$. 

To ensure the accurate detection of switching surfaces, we further store the state and control information for a sequence of $100$ points along the resulting optimal trajectory generated from each initial condition. This process results in a dataset consisting of $N = 100N_s$ entries:
\begin{equation*}
 \mathcal{T} = \bigl\{\bm{x}^{(i)}, u_0^{(i)}\bigr\}_{i}\cup\bigl\{\bm{y}^{(i)}(\tau_k), u^{(i)}(\tau_k)\bigr\}_{i,k}, \quad \text{for } i = 1,\dots,N_s,\; \text{and\;} k = 1,\dots,100,
\end{equation*}
where $\bm{y}^{(i)}(t)$ is the optimal trajectory departing from $\bm{x}^{(i)}$ and $\tau_k$ are uniformly spaced discrete times in $(0,T\,]$. For convenience, we will denote the input $u_\theta(\cdot)$ as $\bm{z}$, and rearrange the training set as 
\begin{equation*}
 \mathcal{T} = \bigl\{\bm{z}^{(j)}, u^{(j)}\bigr\}_{j=1}^{N}\,,
\end{equation*}
where $N=100N_s$.
\subsection{Training of the Model}
The network is trained to minimize a cross-entropy loss function on data, ensuring the output probabilities align with the provided labels. 
The training is performed using the Adam optimizer \cite{Adam}. The dataset is divided into two subsets: 90\% for training, 10\% for testing. The number of layers and neurons is selected to balance model complexity and computational efficiency, and hyperparameters such as learning rate and batch size are tuned using a grid search with the aim of maximizing the approximation performance. These choices are problem-specific and will be clarified for each numerical test in Section \ref{numtests}.

In this case, the binary cross-entropy loss function measures how well the neural network's predicted probabilities align with the true binary labels ($u(\bm{z}) \in \{-1, 1\}$). The binary cross-entropy loss for a single data point $\bm{z}$ is given by:
\begin{equation*}
L(\bm{z}, u(\bm{z})) = - \big[ \lambda \log\bigl(u_\theta(\bm{z})\bigr) + (1 - \lambda) \log\bigl(1 - u_\theta(\bm{z})\bigr) \big], 
\end{equation*}
where:
\begin{itemize}[itemsep=0.1cm,topsep=0.1cm]
 \renewcommand{\labelitemi}{\tiny{$\blacksquare$}}
 \item $\lambda \in \{0, 1\}$ is the true label, with $\lambda = 0$ corresponding to $u(\bm{z}) = -1$ and $\lambda = 1$ corresponding to $u(\bm{z}) = 1$,
 \item $u_\theta(\bm{z})$ is the network's predicted likelihood that $u(\bm{z})=1$, as in \eqref{net_prob}.
\end{itemize}

For a dataset of $N$ samples, the total binary cross-entropy loss is:
\begin{equation}\label{eq:loss}
 \mathcal{L} = \frac{1}{N} \sum_{j=1}^N \bigl[ -\lambda_j \log\bigl(u_\theta\bigl(\bm{z}^{(j)}\bigr)\bigr) - (1 - \lambda_j) \log\bigl(1 - u_\theta\bigl(\bm{z}^{(j)}\bigr)\bigr) \bigr].
\end{equation}
This loss function encourages the network to predict probabilities $u_\theta(\bm{z}^{(j)})$ close to 1 when $\lambda_j = 1$ and close to 0 when $\lambda_j = 0$, effectively learning the mapping from states $\bm{z}^{(j)}$ to control labels $u^{(j)}$.

The model's performance is assessed by measuring the proportion of correctly predicted labels out of the total number of samples 
\begin{equation*}
\text{Accuracy} = \frac{1}{N} \sum_{j=1}^N \bm{1}_{\bigl(\tilde{u}(\bm{z}^{(j)})=u^{(j)}\bigr)},
\end{equation*}
where $\bm{1}$ denotes the indicator function.


\section{Numerical Tests}\label{numtests}
In this section, we evaluate the proposed methodology on an {$n$-th} order integrator, where $n$ ranges from $2$ to $5$. The discussion includes the process of data generation, the architecture of the model, and the choice of hyperparameters. In particular, the network architecture was selected via a grid search. We explored NNs with between \(1\) and \(3\) hidden layers, and number of neurons drawn from the set \(\{10,20,30,40,50,60,80,100\}\). Across this search space, the \(tanh\) activation function consistently yielded the most stable and accurate results. For completeness, we also evaluated $ReLU$ and $sigmoid$ activations, but these did not improve performance in our setting. The final architectures reported in the following tests correspond to the best-performing models identified from the above grid.  

We compare the trajectories controlled by the trained model against the optimal trajectories obtained using Algorithm \ref{alg:deflation} to solve \eqref{eq:polynomialSystemB0}. Across the numerical examples, we demonstrate that interpreting $u_\theta$ as a binary classifier with confidence information improves its approximation performance. 
We evaluate the robustness of the feedback approximation under noisy trajectories and highlight the computational advantages of using Hermite quadratic forms to determine a bound for the number of roots to be identified through deflation.
\subsection{The double integrator.}
We consider the time-optimal control problem 
\begin{equation*}
 \min\limits_{u(t)} T\qquad \text{such that }\quad\begin{cases}
 \dot{y}_1 = y_2\,,\qquad y_1(0) = x_1\,,\qquad {y}_1(T) = 0\,,\\
 \dot{y}_2 = u\,,\qquad\; y_2(0) = x_2\,,\qquad {y}_2(T) = 0\,,\\
 \end{cases}
\end{equation*}
whose solution is given by a piecewise constant control, starting from an initial control $u_0 \in \{-1,1\}$ and a switching sequence $\bm{t}= (t_1,t_2)^\top$, computed by solving the associated polynomial system \eqref{eq:polynomialSystemB0}, which for $n=2$ reads:
\begin{equation}\label{eq:2dtest}
\begin{aligned}
 \begin{cases}
 t_1 - t_2 + x_{2} = 0\\
\dfrac{t_1^2}{2} + t_1 t_2 - \dfrac{t_2^2}{2} + x_{1} + x_{2}(t_1+t_2) = 0
 \end{cases}\qquad \text{for }\; &(u_0=1),\\
 \begin{cases}
 t_1 - t_2 - x_{2} = 0\\
\dfrac{t_1^2}{2} + t_1 t_2 - \dfrac{t_2^2}{2} - x_{1} - x_{2}(t_1+t_2) = 0
 \end{cases}\qquad \text{for }\; &(u_0=-1).
 \end{aligned}
\end{equation}
Admissible solutions of \eqref{eq:2dtest} specify the optimal trajectory, starting from $\bm{x}=(x_1,x_2)^{\top}$ with control $u_0$ and switching sequence\footnote{For the sake of clarity, we recall that this switching sequence is to be interpreted in the following way: there is an initial control value $u_0$ in $[0,t_1)$, which then switches to $-u_0$ in $[t_1,t_1+t_2]$, being the minimum time $T=t_1+t_2$.} $\bm{t}$. We generate a dataset by sampling $N_s = 50$ uniformly distributed initial conditions $\bigl\{\bm{x}^{(i)}\bigr\}_{i=1}^{N_s}$ in the numerical domain $[-1,1]^2$, generating couples of optimal state-actions $\bigl\{\bm{y}^{(i)}(\tau_k),u^{(i)}(\tau_k)\bigr\}$ for $0 = \tau_0 < \cdots \tau_k < \cdots \tau_{100}= T $. 

We rely on the $N=100\cdot N_s$ generated data pairs
\begin{equation*}
 \mathcal{T} = \{\bm{z}^{(j)},u^{j}\}_{j=1}^{N},
\end{equation*}
to train a NN classifier mapping states $\bm{z}$ into the associated feedback $u_\theta(\bm{z})$. After a grid search over a number of possible feedforward architectures, we select $u_\theta(\cdot)$ to have a single hidden layer of width $n_2 = 100$ neurons and $\sigma_2=\tanh(\cdot)$ as activation function. 
We train the model to minimize the loss function \eqref{eq:loss} over the training set via Adam. 

After training, the model achieves a test set accuracy of 
 $99.38\%$ with a corresponding loss $\mathcal{L} = 0.0156$.
In Figure \ref{fig:classifier2d}, we show how the trained classifier labels points in $[-0.25,0.25]^2$. The model assigns controls with high probability to points on either side of the switching surface, while points lying directly on the surface are associated with lower confidence, as indicated by the lighter colors in the figure.
\begin{figure}
 \centering
 \includegraphics[width=0.5\textwidth]{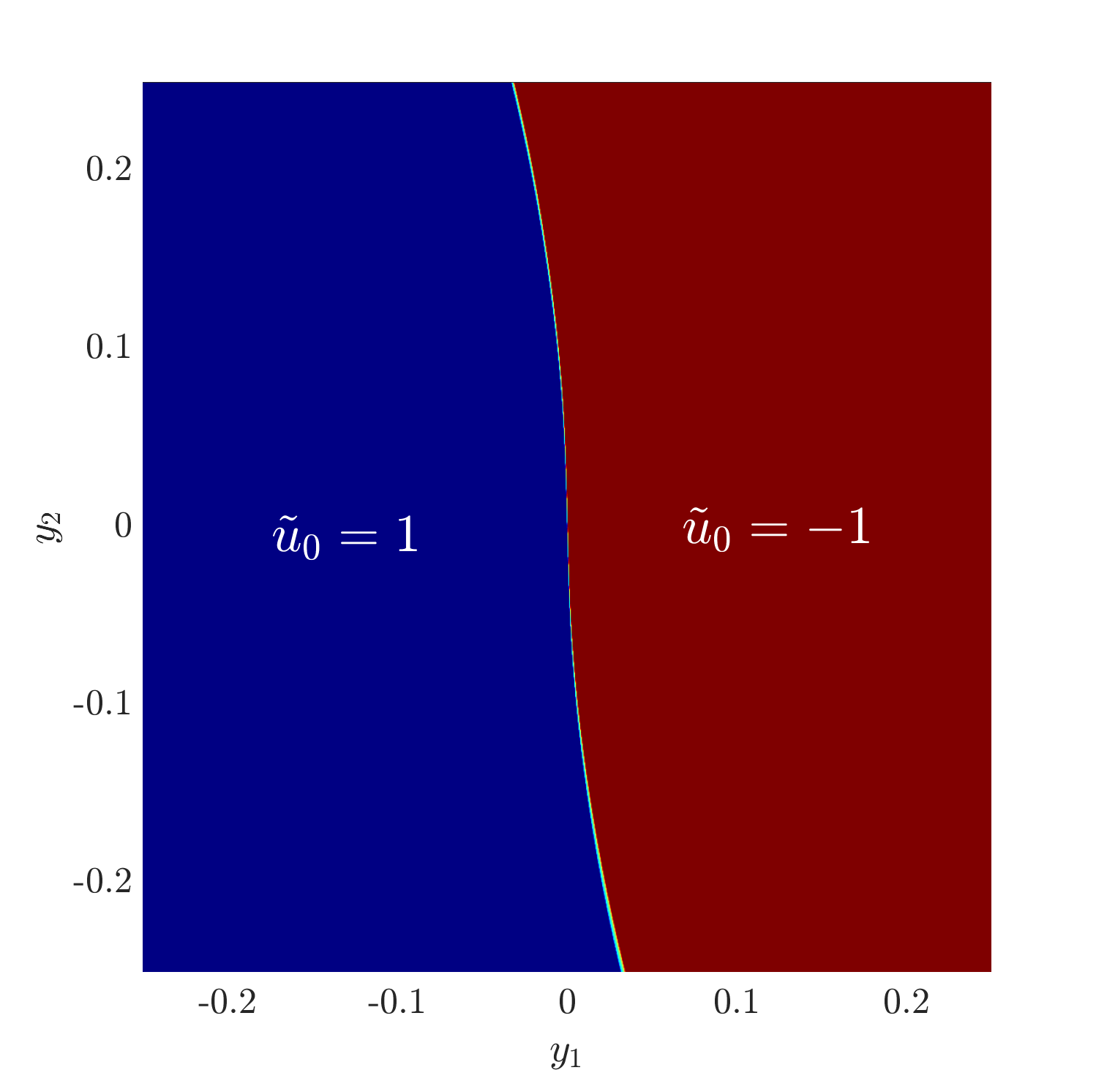}
 \caption{The trained classifier identifies the switching surface as a low-confidence region.}
 \label{fig:classifier2d}
\end{figure}

All optimal trajectories eventually intersect the switching hyperplane and travel along it until reaching the origin. Therefore, the model's accuracy on the switching surface is crucial for achieving a reliable feedback approximation. At this point, we have two options: 
\begin{itemize}[itemsep=0.1cm,topsep=0.1cm]
 \renewcommand{\labelitemi}{\tiny{$\blacksquare$}}
 \item we rely on the classification provided by the neural network $u_\theta$, disregarding its low confidence,
 \item or we define a confidence threshold, and if the confidence falls below this threshold, we invoke the open-loop solver to determine the feedback control at the low-confidence point. 
\end{itemize}
We compare these two procedures in Figure \ref{fig:paths2d}. The trajectory controlled through the neural feedback approximation is integrated via explicit Euler, with time-step $5^{-4}$. The enhanced approximation---which is based on the polynomial system solution when the classifier confidence is below $\varepsilon = 0.01$---is triggered on $1\%$ of the trajectory points and does not improve significantly the approximated optimal horizon $T_{NN}$. We conclude that, in this first example, even low-confidence points are classified correctly via $u_\theta$.
\begin{figure}[ht!]
 \centering
 \includegraphics[width=0.45\textwidth]{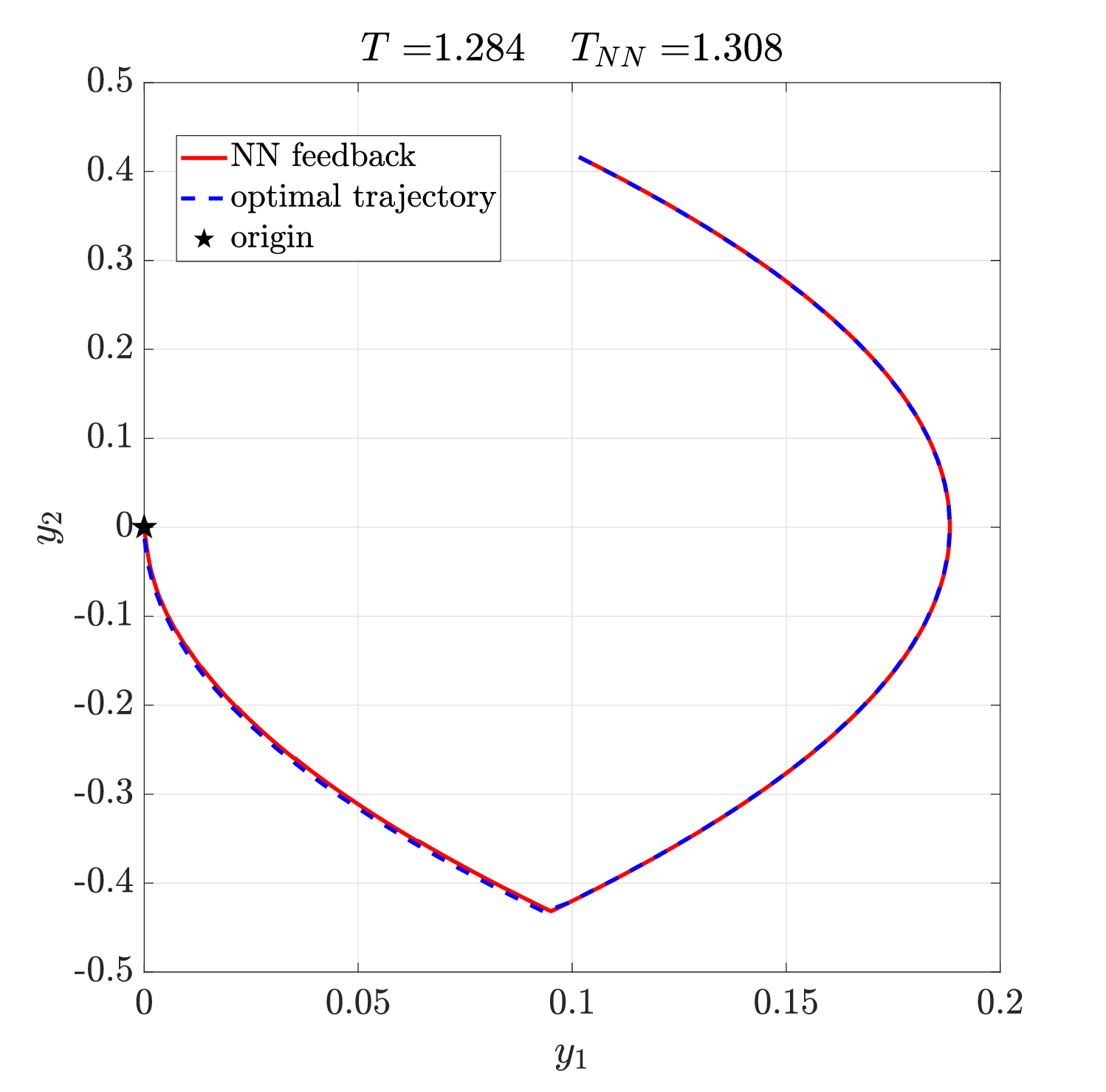}
 \includegraphics[width=0.45\textwidth]{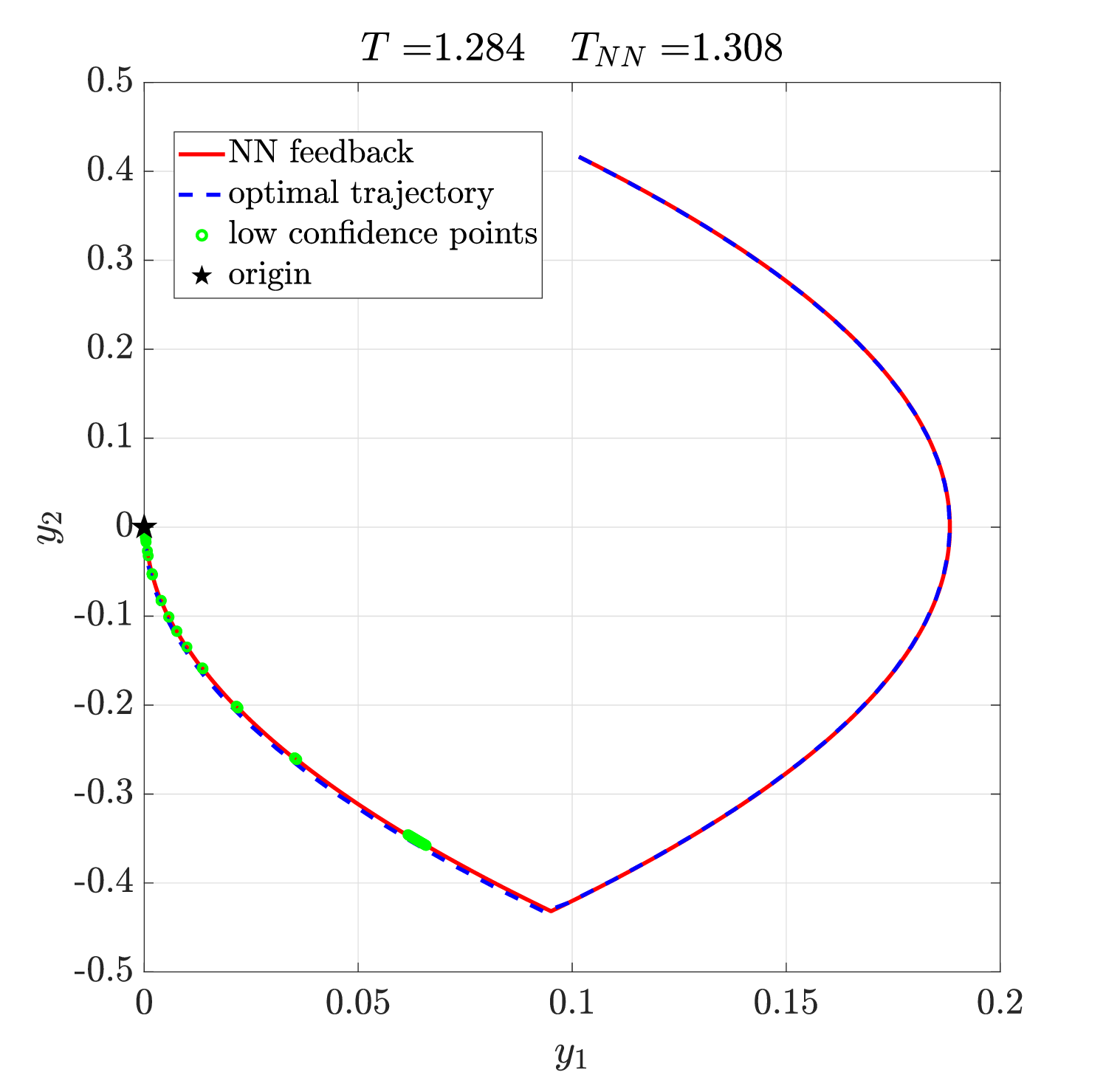}
 \caption{Comparison of the optimal trajectory computed via deflation with the one controlled with the classifier $u_\theta$. On the left, the NN feedback is solely determined via $u_\theta$, whilst on the right we rely on the identification of the solution of the polynomial system \eqref{eq:2dtest} to identify the feedback at low-confidence points. $T$ denotes the exact minimum time.}
 \label{fig:paths2d}
\end{figure}

\subsection{Triple integrator}
Similarly as in the first test, we consider the time-optimal control problem subject to 
\begin{equation*}
 \begin{cases}
 \dot{y}_1 = y_2\\ 
 \dot{y}_2 = y_3\\ 
 \dot{y}_3 = u
 \end{cases}
\end{equation*}
with state variable $\bm{y}=(y_1,y_2,y_3)^{\top}$ and boundary conditions $\bm{y}(0)= \bm{x}=(x_1,x_2,x_3)^{\top}$, $\bm{y}(T)=\bm{0}$. The polynomial system \eqref{eq:polynomialSystemB0} features $3$ equations for the unknown switching sequence $\bm{t}=(t_1,t_2,t_3)$. We sample state and controls along the optimal trajectories departing from $N_s = 5000$ uniformly distributed initial conditions in $[-1,1]^3$. The resulting dataset is used to train a model $u_\theta$ characterized by a single hidden layer with width $n_2 = 80$ and $\sigma_2 = \tanh(\cdot)$ activation function. After training, the model reaches $99.12\%$ accuracy in the test set, associated with a loss $\mathcal{L} = 0.0334$.
In this example, the enhanced feedback approximation, calling the polynomial solver for identifying the control at points with confidence below $\varepsilon = 0.005$, occurs on $4.08\%$ of the trajectory points, and improves the approximated horizon when compared to the trajectory controlled solely via $ u_\theta$. We show the results in Figure \ref{fig:paths3d}, where the NN-controlled trajectory is computed via a forward Euler scheme with stepsize $10^{-3}$, whilst the optimal trajectory is computed through exact integration.
 \begin{figure}[h!]
 \centering
 \includegraphics[width=0.45\textwidth]{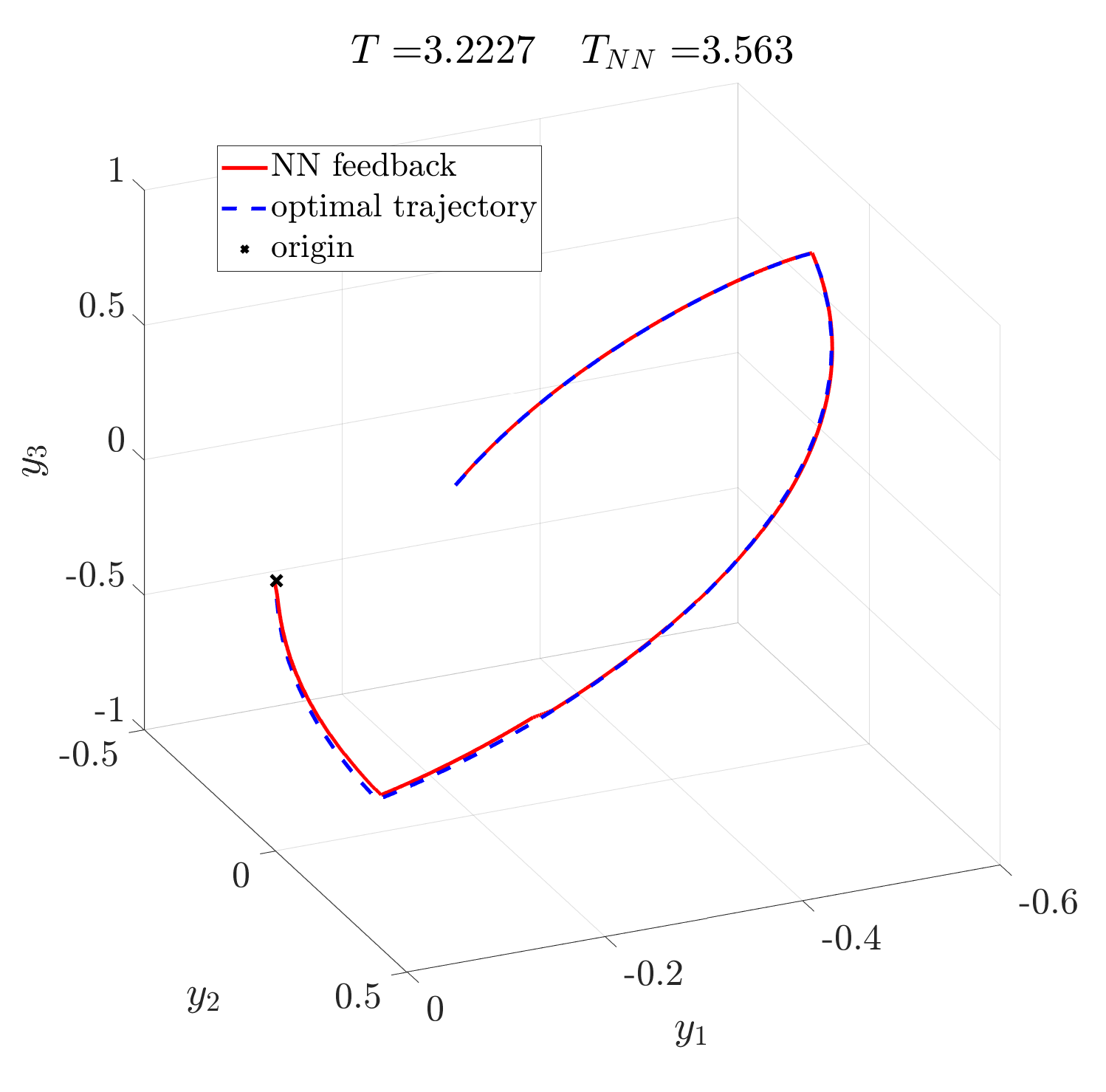}
 \includegraphics[width=0.45\textwidth]{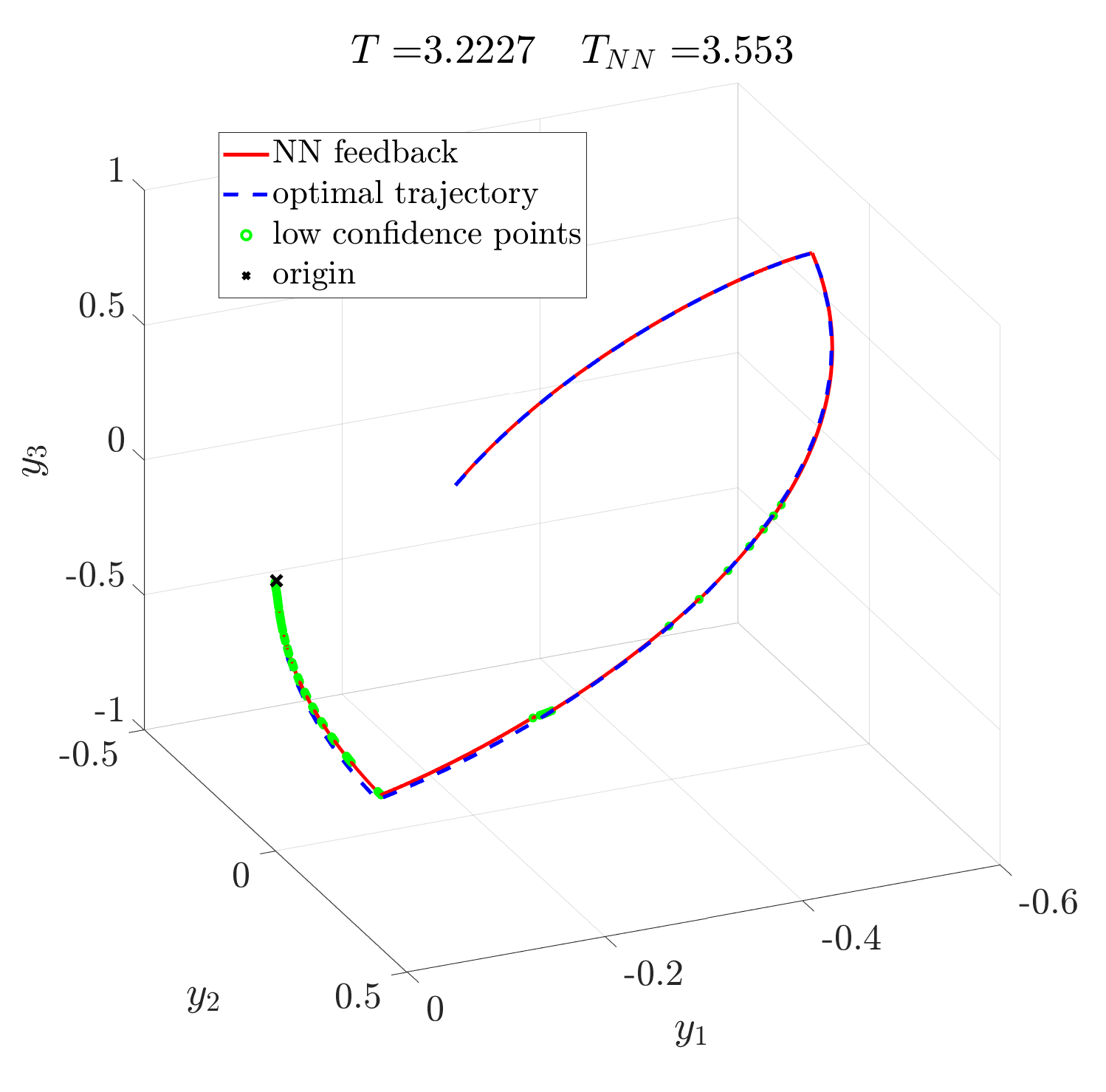}
 \caption{Comparison of the optimal trajectory with the $u_\theta$-controlled (left) and with the confidence-enhanced approximation (right). Relying on the polynomial solver for low-confidence points improves the approximation, as the time $T_{NN}$ needed to reach the origin decreases.}
 \label{fig:paths3d}
\end{figure} 

We motivated the introduction of the feedback NN approximation as a way to achieve robustness against noise. 
To evaluate this, we introduce white noise into the last component of the system. Specifically, at every discrete integration time $t$ we add $\bigl(0,0,\zeta(t)\bigr)$, where $\zeta(t)\sim\mathcal{N}(0,\sigma^2)$, with $\sigma^2=0.02$. Starting from the same initial condition, we perform a Monte Carlo simulation for both the open-loop and the approximated feedback control approaches. 
Figure \ref{fig:robustness3d} illustrates the mean and variance obtained from $1000$ noisy controlled trajectories. 
As expected, while the open-loop solution diverges on average from the target final state, the approximated feedback law successfully drives the system towards the origin.

\begin{figure}[ht!]
 \centering
 \includegraphics[width=0.6\textwidth]{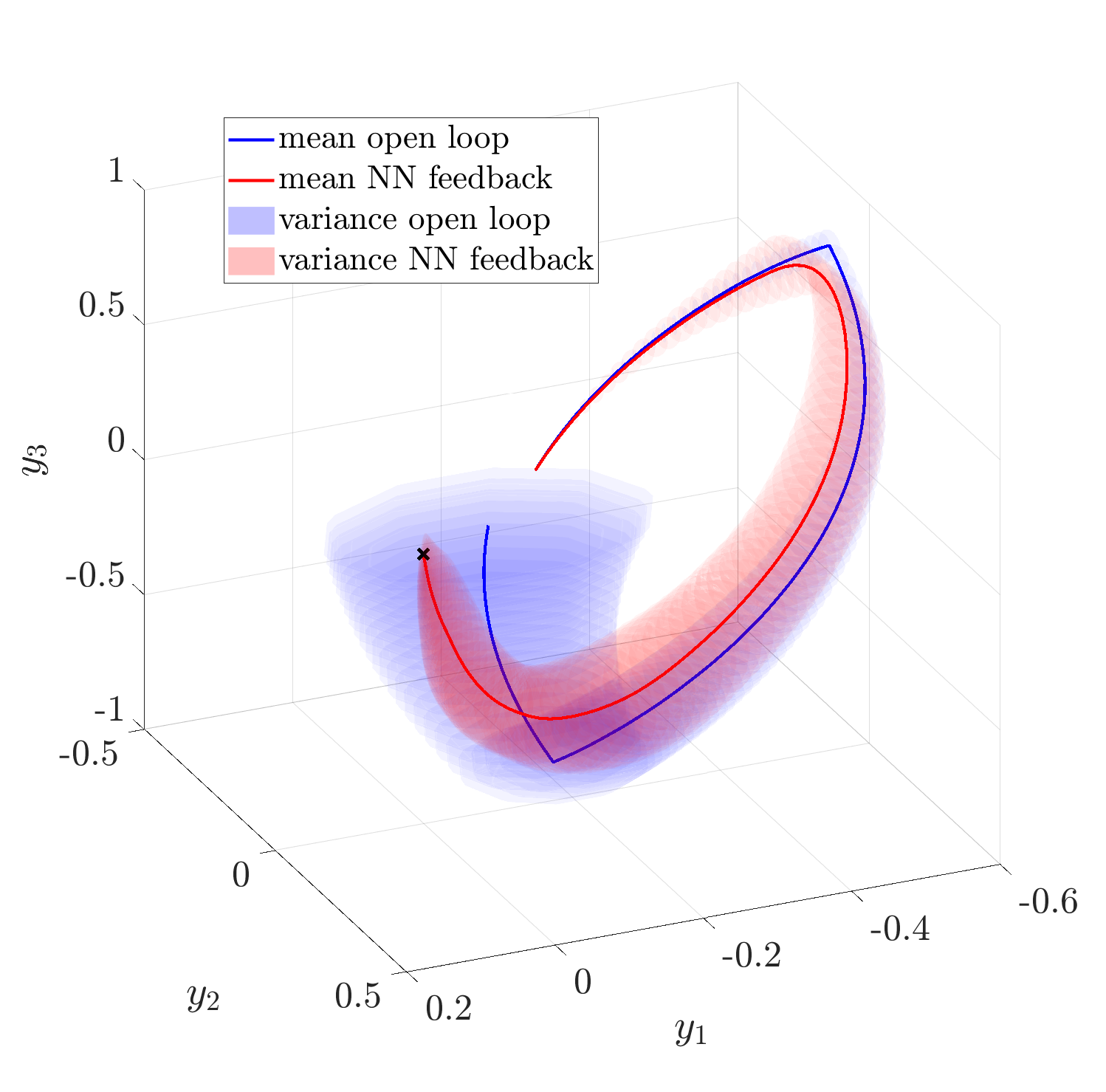}
 \caption{Mean and variance of a Monte Carlo simulation with $1000$ trajectories perturbed by Gaussian noise. The robustness of the approximated feedback law ensures that the controlled system reaches the target destination, while the open-loop solution, obtained as the solution of the polynomial system \eqref{eq:polynomialSystemB0}, diverges from it.}
 \label{fig:robustness3d}
\end{figure}

\subsection{4th-order tests}
We consider the time-optimal control problem subject to the fourth-order integrator for state variable $\bm{y}=(y_1,\dots,y_4)^{\top}$ and boundary conditions $\bm{y}(0)=\bm{x}=(x_1,\dots,x_4)^{\top}$, $\bm{y}(T)=\bm{0}$. In this higher-dimensional example, we examine the upper bound on the number of solutions of the associated system \eqref{eq:polynomialSystemB0} for the switching sequence $t_1,\dots,t_4$. The theoretical bound on the total number of real and complex solutions is $4!=24$. However, as discussed in Section \ref{hermy}, the use of Hermite quadratic forms allows us to refine this bound by identifying the number of real solutions more precisely. 

We sample initial conditions $\bm{x}^{(i)}\in\bigl([-1,1]\cap\mathbb{Q}\bigr)^4$, for $i=1,\dots,N_s = 10^4$, and for each $\bm{x}^{(i)}$ we compute the number of real roots of the polynomial system associated with $u_0 = 1$ and $u_0=-1$, by computing the Hermite quadratic form, denoted by $\mathcal{H}(J^+)$ and $\mathcal{H}(J^-)$, respectively. 
Note the transition from $\RR$ to $\QQ$, which we do as computation of the Hermite quadratic form over the reals is computationally unreliable, if not impossible. 
This preliminary step reduces the computational cost of data generation, as the bound for the deflation algorithm \ref{alg:deflation} is tightened relative to the theoretical one. The range of real roots given by the Hermite quadratic form varies from 0 to 8, depending on the initial condition. For $\bm{x}^{(i)}$'s with $0$ real solutions for $J^+$ or $J^-$, this directly determines the sign of the initial control $u_0$. These new bounds reduce the CPU time for data generation from $108$ to $49$ minutes when compared to the theoretical bound. 

\begin{figure}[ht!]
 \centering
 \includegraphics[width=0.5\textwidth]{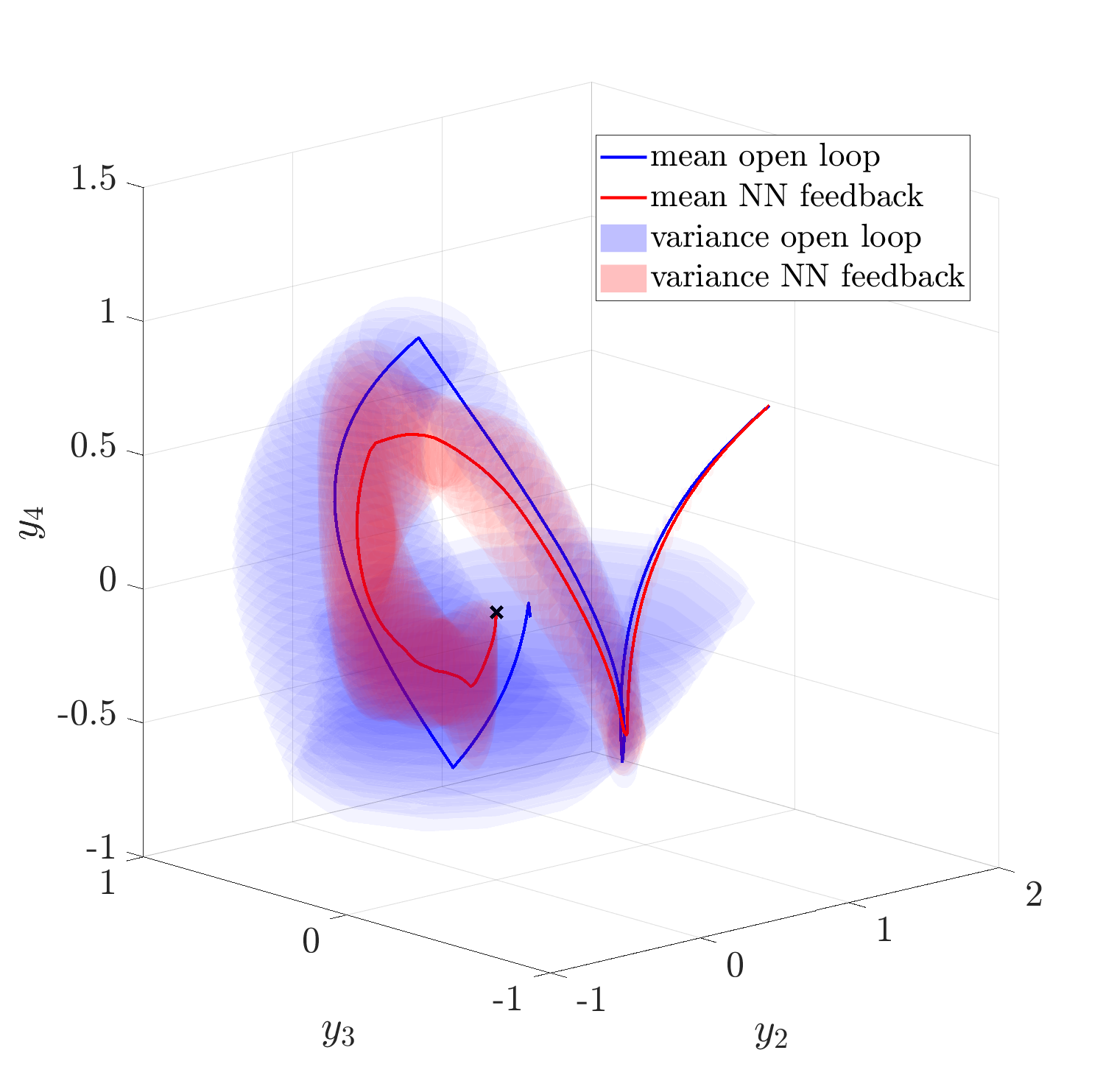}\\
 \includegraphics[width=0.4\textwidth]{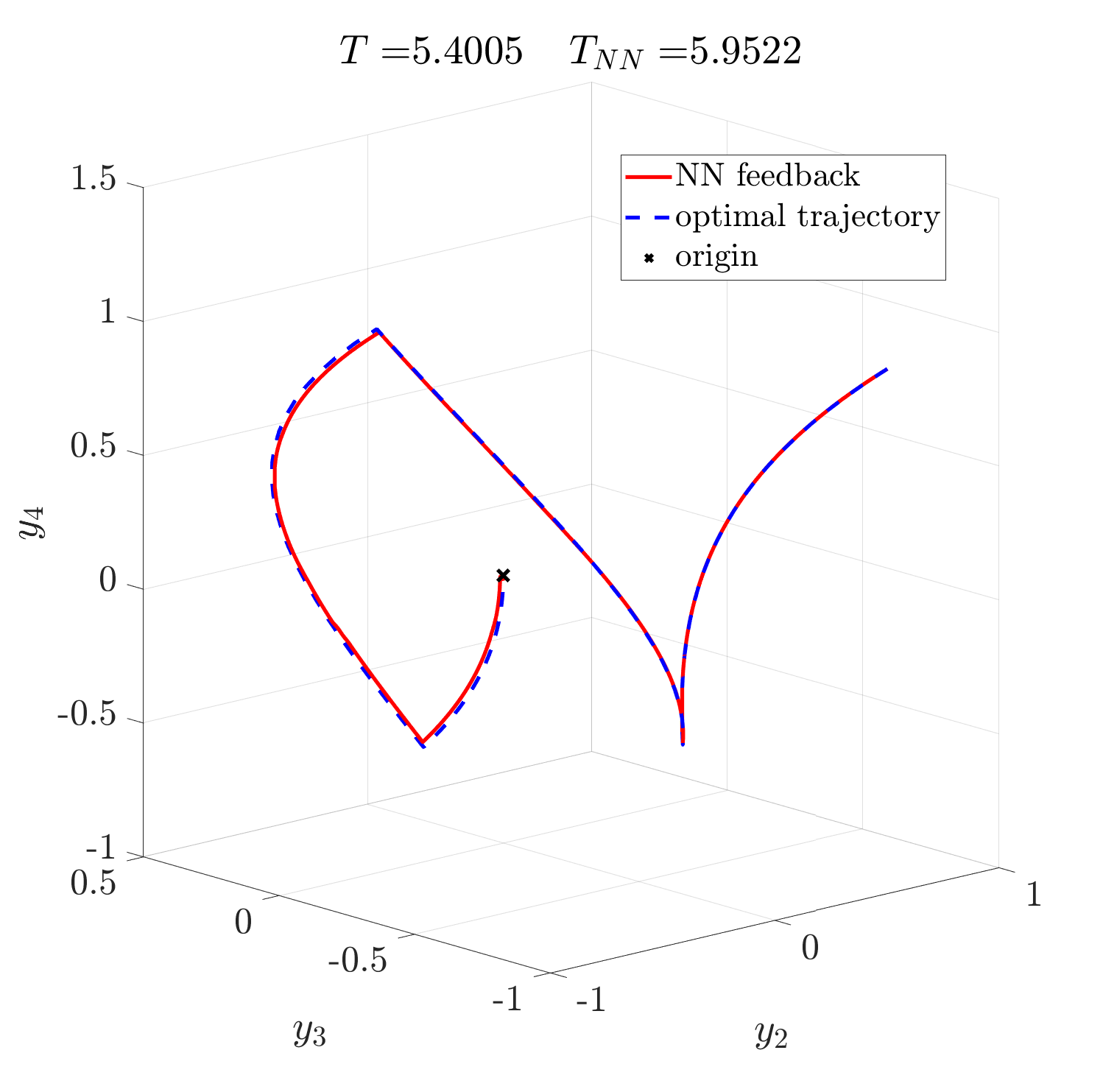}
 \includegraphics[width=0.4\textwidth]{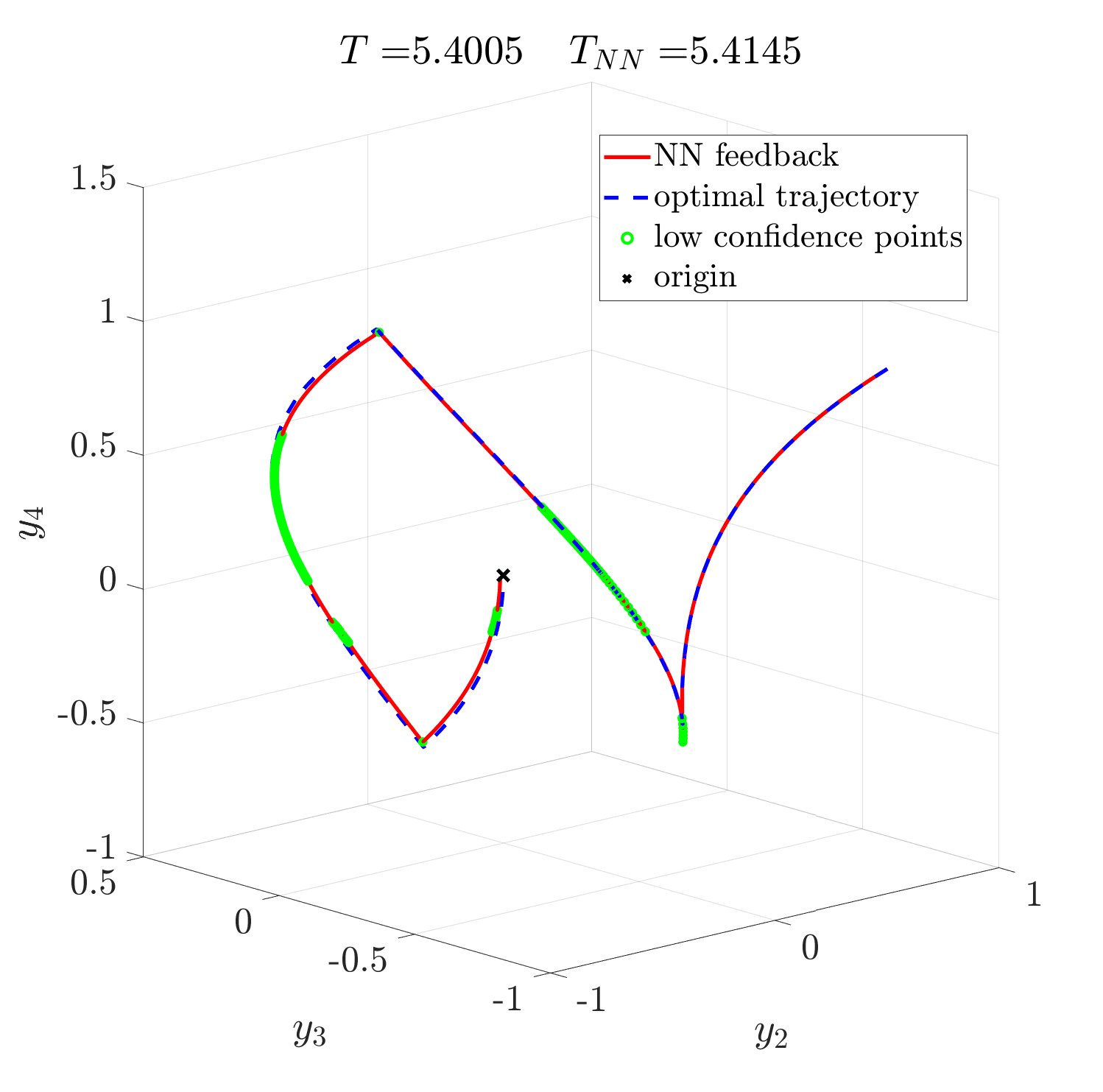}
 \caption{Monte Carlo simulation of $1000$ controlled noisy trajectories via open loop control signal vs. approximated feedback control (top). Comparison of the optimal trajectory with the one controlled via the approximated feedback law (left). Calling the polynomial solver for the low-confidence points improves the approximation performance in terms of time horizon $T_{NN}$ (right).}
 \label{fig:paths4d}
\end{figure}
We train a model $u_\theta$ characterized by $2$ hidden layers, with width $n_2=n_3 =100$ and $\sigma_2=\sigma_3=\tanh(\cdot)$ activation functions. 
After training, we evaluate the model in the test set, obtaining the accuracy of $96.76\%$ and the loss $\mathcal{L} = 0.0882$. 
In Figure \ref{fig:paths4d}, we analyze the controlled trajectories comparing the optimal solution against its approximation along the dimensions $y_2,y_3,y_4$. We consider a low-confidence threshold $\varepsilon=0.005$, under which we rely on the deflation routine to determine the control. 
Low-confidence classification triggers on $1.3\%$ of the trajectory points. 
Even more prominently than in the lower order examples, the enhanced approximate feedback improves the performance of the control law. As in the previous example, we also test the robustness of the feedback approximation by injecting additive noise of the form $\bigl(0,0,0,\zeta(t)\bigr)$, with $\zeta(t)\sim\mathcal{N}(0,\sigma^2)$, where $\sigma=2^{-2}$. A Monte Carlo simulation of $1000$ controlled noisy trajectories shows how the approximated feedback control successfully steers the system towards the origin, while the open-loop solution diverges. 

\subsection{5th-order integrator}
Finally, we consider the {$5$-th} order integrator for state $\bm{y}=(y_1,\dots,y_5)^{\top}$ with boundary conditions $\bm{y}(0)=\bm{x}=(x_1,\dots,x_5)^{\top}$ and $\bm{y}(T) = \bm{0}$. The theoretical upper bound for the number of solutions $(t_1, \dots, t_5)$ of the associated polynomial system is $5! = 120$. However, excessively deflating a polynomial system can lead to instability. Heuristically, as the number of identified and deflated roots increases, the root-finding procedure may diverge, failing to locate additional existing solutions. 
This pathological behaviour naturally worsens as the upper bound increases. While still feasible, the CPU time associated to the computation of the Hermite quadratic form can be used instead to sample different initial conditions with the un-informed deflation algorithm.

We sample $N_s = 50,000$ points $\bm{x}^{(i)}\in[-1,1]^5$, for $i = 1, \dots, N_s$, and include in the dataset $\mathcal{T}$ only the optimal trajectories corresponding to initial conditions for which the deflation algorithm successfully identifies an admissible solution. This selection process excludes approximately $20\%$ of the initial conditions. We consider a model $u_\theta$ with $2$ hidden layers having width $n_2=n_3=80$ and $\sigma_2=\sigma_3 = \tanh(\cdot)$ activation functions. The trained classifier achieves accuracy {$99.61\%$} in the test set, associated with loss $\mathcal{L} = 0.0141$. In Figure \ref{fig:paths5}, we test the performance of the NN feedback controller against the optimal trajectory. The performance of the approximation is significantly improved by selectively intervening with the polynomial system solver whenever the classifier confidence drops below $\varepsilon=0.1$. This occurs only at $5$ low-confidence points, all located at the intersection of switching surfaces.
\begin{figure}[h!]
 \centering
 \includegraphics[width=0.45\textwidth]{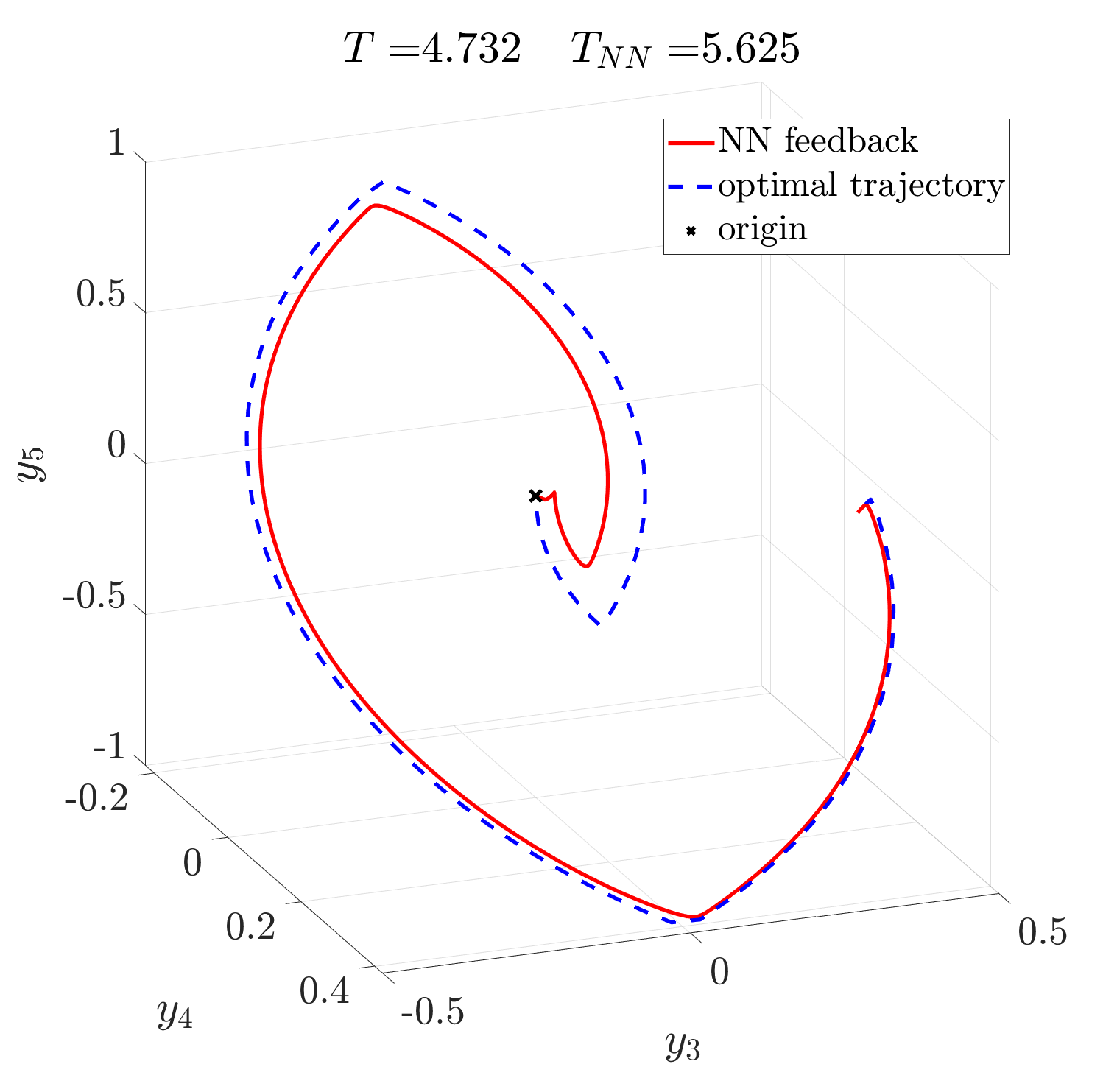}
 \includegraphics[width=0.45\textwidth]{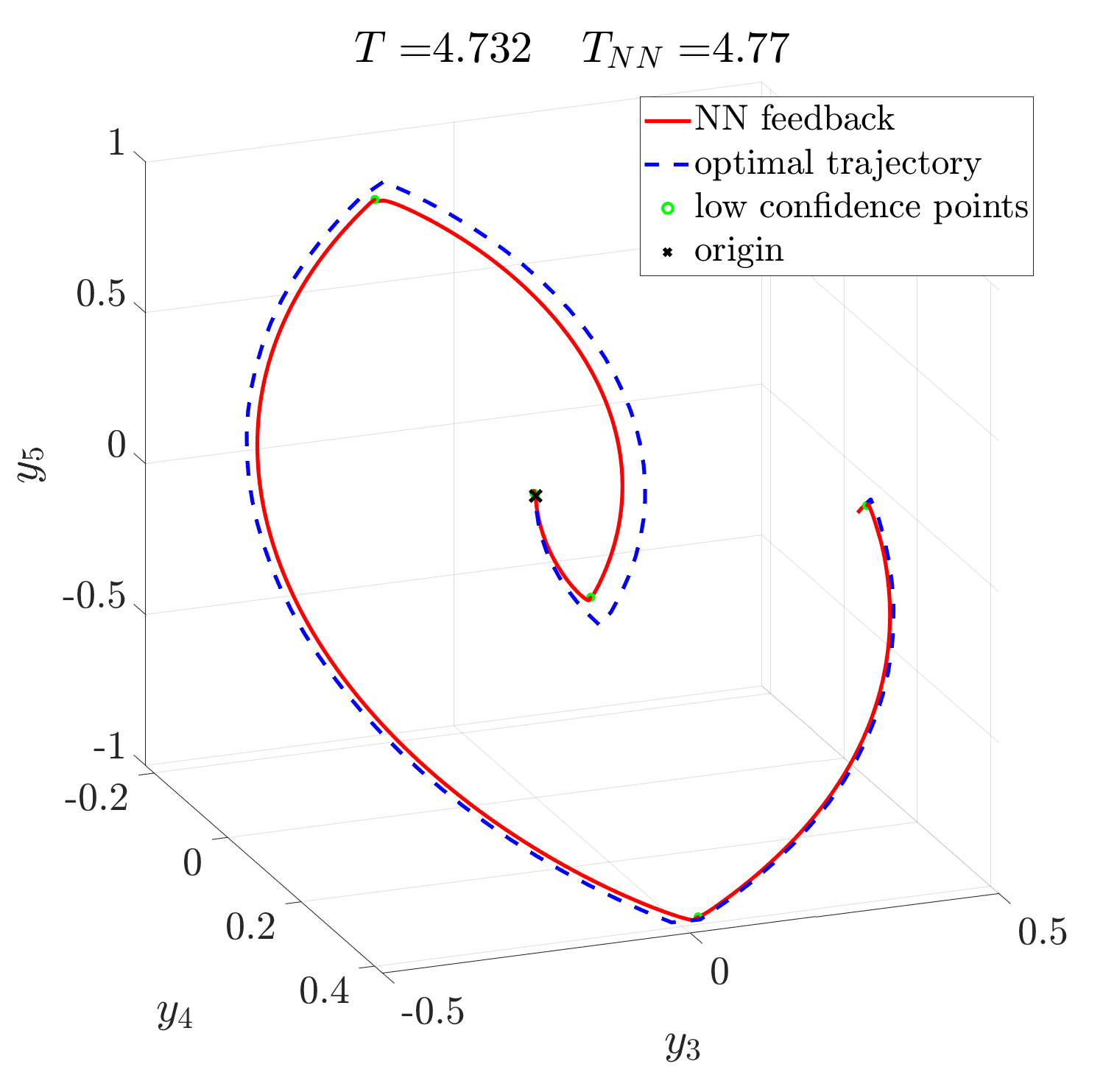}
 \caption{Comparison of the optimal trajectory with the trajectory controlled using the approximated feedback law (left). Leveraging the polynomial solver for low-confidence points enhances the approximation's performance in terms of the time horizon $T_{NN}$ (right).}
 \label{fig:paths5}
\end{figure}
\section{Concluding remarks}
In this paper, we have revisited a classical time-optimal control problem from a perspective combining polynomial systems, numerical analysis, and statistical machine learning. Our original motivation was to explore the scalability of the approach originally developed in \cite{nilpotentFeedback}, where the use of Gr\"obner bases and techniques from real algebraic geometry were proposed to solve polynomial systems arising in time-optimal control. In this respect, we have reached a negative result: to the best of our experience, the computational cost and complexity of such tools become prohibitively expensive as the dimension of the state space increases beyond 3, rendering them inapplicable for constructing real-time feedback controllers. Hence, we have resorted to the numerical solution of polynomial systems using Newton’s method on a deflated variant, which can effectively exhaust all possible roots. Here, the Hermite quadratic form is proposed as a tool to establish a bound on the number of roots of the polynomial system, increasing the efficiency of the deflation algorithm. Overall, this constitutes a consistent computational workflow for the solution of time-optimal trajectories for nilpotent linear systems.

In the second part of the paper, we adopt a statistical machine learning approach to interpret the time-optimal feedback controller as a binary classifier over the state space. This allows us to construct a control law via supervised learning, yielding a bang-bang control with a confidence indicator. This is particularly relevant to develop a robust, chattering-free control in the vicinity of the switching surface.  Our numerical tests show that, enhancing the feedback law with an open-loop solver whenever the confidence of the classifier is low, leads to a fair trade-off between accuracy of the approximate optimal trajectory, robustness, and computational cost. 

This work suggests different avenues for future research. The use of the Hermite quadratic form to determine the number of roots of time-optimal polynomial systems is promising, and merits a deeper consideration. The interpolation of the bang-bang feedback law as a binary classifier enables the application of machine learning methods for a class of control problems that remains elusive to other type of global approximation techniques due to the non-smoothness of the optimal control field. Finally, the proposed methodology can be applied to other time-optimal and sparse control problems with bang-bang or bang-zero-bang structures. 

\paragraph{\bf Acknowledgments.} We thank two anonymous reviewers for their careful reading of the manuscript and for comments that helped improve the clarity and presentation of the paper. This material is based upon work supported by the Air Force Office of Scientific Research under award number FA8655-26-1-B011.

\paragraph{\bf Conflicts of interest.} On behalf of all authors, the corresponding author states that there is no conflict of interest.

\bibliographystyle{abbrv}
\bibliography{citations}

\end{document}